\theoremstyle{plain}
\newtheorem{theorem}{Theorem}[section]
\newtheorem{corollary}[theorem]{Corollary}
\newtheorem{lemma}[theorem]{Lemma}
\newtheorem{proposition}[theorem]{Proposition}
\newtheorem{definition}[theorem]{Definition}
\theoremstyle{definition}
\newtheorem{remark}[theorem]{Remark}
\def\R{\mathbb{R}}
\def\S{\mathbb{S}}
\def\H{\mathbb{H}}
\def\Z{\mathbb{Z}}
\def\C{\mathbb{C}}
\def\N{\mathbb{N}}
\def\H{\mathbb{H}}
\newcommand{\PP}{\mathcal{P}}
\numberwithin{equation}{section}
\title{Improved higher-order Sobolev inequalities on CR sphere}
\begin{document}
\author{Zetian Yan}
\address{Department of Mathematics \\ Penn State University \\ University Park \\ PA 16802 \\ USA}
\email{zxy5156@psu.edu}
\keywords{CR Yamabe problem; CR GJMS operators; sharp Sobolev inequalities} 
\subjclass[2020]{Primary 39B62; Secondary 32V20, 32V40, 35B38.}
\begin{abstract}
We improve higher-order CR Sobolev inequalities on $S^{2n+1}$ under the vanishing of higher order moments of the volume element. As an application,
we give a new and direct proof of the classification of minimizers of the CR invariant higher-order Sobolev inequalities. In the same spirit, we prove almost sharp Sobolev inequalities for GJMS operators to general CR manifolds, and obtain the existence of minimizers in $C^{2k}(N)$ of higher-order CR Yamabe-type problems when $Y_k(N)<Y_k(\H^n)$. 
\end{abstract}
\maketitle

\section{Introduction}
The main aim of this note is to improve CR Sobolev inequalities on $S^{2n+1}$ under the vanishing of higher order moments of the area elements; see Theorem \ref{improve} for a precise statement. As a direct application, we simplify Frank and Lieb's proof \cite{FL10} of the existence of minimizers of the sharp Sobolev inequalities for the embeddings $S^{k,2}(\H^n)\hookrightarrow L^{\frac{2n+2}{n+1-k}}(\H^n)$. Moreover, using the Frank--Lieb argument developed in \cite{Case19}, we give a simple and direct classification of optimizers. In particular, our proof avoids complicated computations, partially addressing an open problem of Frank and Lieb \cite[pg.352]{FL10}.

Our derivation of Theorem \ref{improve} is motivated by recent work in \cite{CH22} and \cite{HW21}, where Aubin's Moser--Trudinger--Onofri inequality on $S^n$ and Sobolev inequalities on $S^n$ are improved under the vanishing of higher order moments of the volume element, respectively. The key observation in \cite{HW21} is that, if it is not true, there exists a sequence of functions $\{F_i\}$ in $W^{1,p}(S^n)$ such that
\begin{displaymath}
\|\nabla F_i\|^p_p\leqslant \frac{1}{\alpha}, \quad \|F_i\|_{p^*}=1, 
\end{displaymath}
and $\{F_i\}$ converges to zero weakly in $L^p(S^n)$, where $p^*=\frac{np}{n-p}$ and $\alpha$ is the leading coefficient of the improved Sobolev inequality \cite[Theorem 1.1]{HW21}. Combining this with the concentration compactness principle \cite[Lemma 4.1]{Lions85}, this contradicts the choice of $\alpha$. We will apply this idea to prove Theorem \ref{improve} in Section 3. The crucial point of the proof is the concentration compactness principle Lemma \ref{ccp} on general CR manifolds. 

Let $P_k^{\theta_c}$, $k\in\N$ with $k<n+1$, be the CR GJMS operators associated with the canonical contact form $\theta_c$ on $\H^n$. For any $f\in S^{k,2}(\H^n)$, where $S^{k,2}(\H^n)$ is the Folland--Stein space introduced in \cite{FS74}, $p=\frac{2Q}{Q-2k}$, the sharp Hardy-Littlewood-Sobolev inequalities are given by 
\begin{equation}\label{optimal}
\begin{split}
    \left(\int_{\H^n}|f(u)|^pdu\right)^{\frac{2}{p}}&\leqslant C_{n,2k} \int_{\H^n}\bar{f} P_k^{\theta_c} f du,\\
    C_{n,2k}&=\left(\frac{1}{4\pi}\right)^{k}\frac{\Gamma^2(\frac{n+1-k}{2})}{\Gamma^2(\frac{n+1+k}{2})},
    \end{split}
\end{equation}
where $du=\theta_c\wedge (d\theta_c)^n$ is the volume form associated with $\theta_c$ on $\H^n$; see \cite{FL10} for more details. \\
~\\
Equivalently, via the Cayley transform $\mathcal{C}:\H^n\to S^{2n+1}\backslash (0,\cdots, -1)$ given by 
\begin{displaymath}
    \mathcal{C}(z,t)=\left(\frac{2z}{1+|z|^2+it}, \frac{1-|z|^2-it}{1+|z|^2+it}\right),
\end{displaymath}
we have for any $F\in S^{k,2}(S^{2n+1})$,
\begin{equation}\label{sharpSo}
\left(\int_{S^{2n+1}}|F|^p d\xi\right)^{\frac{2}{p}}\leqslant C_{n,2k}\int_{S^{2n+1}}\bar{F} P_k^{\theta_0} F d\xi,
\end{equation}
where $d\xi=\theta_0\wedge (d\theta_0)^n$ is the volume form associated with the canonical contact form $\theta_0$ on $S^{2n+1}$.

Recall that a homogeneous polynomial on $\C^{n+1}$ with bidegree $(w,w')$ is a polynomial $g$ satisfying for any $\lambda\in \C$, $z\in \C^{n+1}$,
\begin{displaymath}
    g(\lambda z)=\lambda^w \bar{\lambda}^{w'}g(z).
\end{displaymath}
For a pair of nonnegative integers $(w, w')$, we denote
\begin{center}
$\PP_{w,w'}:=\{$all homogeneous polynomials on $\C^{n+1}$ with bidegree $(w,w')\}$,

$\widetilde{\PP}_{w,w'}:=\bigcup\limits_{(\tilde{w},\tilde{w}')}\PP_{\tilde{w},\tilde{w}'}$, \quad for all $\tilde{w}\leqslant w$, $\tilde{w}'\leqslant w'$,

$\overline{\PP}_{w,w'}:=\left\{g\in \widetilde{\PP}_{w,w'}; \int_{S^{2n+1}}g d\xi=0 \right\}$.
\end{center}
For $m\in \N$ and $0\leqslant \theta\leqslant 1$, as in \cite{HW21} we define
\begin{center}
$\mathcal{M}^c_{w,w'}(S^{2n+1}):=\{\nu$; $\nu$ is a probability measure on $S^{2n+1}$ supported on countably many points such that $\int_{S^{2n+1}}gd\nu=0$, for all $g\in \overline{\PP}_{w,w'}\}$
\end{center}
and 
\begin{displaymath}
    \Theta({w,w'};\theta,2n+1):=\inf\left\{\sum_{i}\nu^{\theta}_i; \nu\in \mathcal{M}^c_{w,w'}(S^{2n+1}), \mathrm{supp}(\nu)=\{x_i\}\subset S^{2n+1}, \nu_i=\nu(x_i)\right\}.
\end{displaymath}

Our first result gives improved CR Sobolev inequalities under the assumption that higher moments vanish.
\begin{theorem}\label{improve}
Denote $Q=2n+2$, $p=\frac{2Q}{Q-2k}$, $k\in\N$ with $k<n+1$. Then for any $\epsilon>0$, and $F\in S^{k,2}(S^{2n+1})$ with
\begin{equation}\label{constraint}
    \int_{S^{2n+1}}g|F|^p d\xi=0
\end{equation}
for all $g\in \overline{\PP}_{w,w'}$, we have
\begin{equation}\label{asi}
\begin{split}
   \left(\int_{S^{2n+1}}|F|^p d\xi\right)^{\frac{2}{p}}&\leqslant \left(\frac{{C}_{n,2k}}{\Theta({w,w'};\frac{Q-2k}{Q},2n+1)}+\epsilon\right) \int_{S^{2n+1}}\bar{F} P_k^{\theta_0} F d\xi\\
   &+C(\epsilon)\int_{S^{2n+1}}|F|^2 d\xi,
\end{split}
\end{equation}
where $C(\epsilon)$ is a constant depending on $\epsilon$.
\end{theorem}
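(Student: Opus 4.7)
The plan is to argue by contradiction, adapting to the CR setting the strategy Hang--Wang used in \cite{HW21}. Suppose the improved inequality fails for some $\epsilon_0 > 0$. Then there is a sequence $\{F_j\} \subset S^{k,2}(S^{2n+1})$ satisfying \eqref{constraint}, normalized by $\|F_j\|_p = 1$, with
\[
1 > \left(\frac{C_{n,2k}}{\Theta} + \epsilon_0\right) \int_{S^{2n+1}} \bar{F}_j P_k^{\theta_0} F_j \, d\xi + j \int_{S^{2n+1}} |F_j|^2 \, d\xi,
\]
where I abbreviate $\Theta := \Theta(w,w';\tfrac{Q-2k}{Q},2n+1)$. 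Setting $\Lambda := (C_{n,2k}/\Theta + \epsilon_0)^{-1}$, this forces $F_j \to 0$ strongly in $L^2(S^{2n+1})$ and $\int \bar{F}_j P_k^{\theta_0} F_j \, d\xi < \Lambda$ for all $j$.

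After passing to a subsequence I may assume $F_j \rightharpoonup 0$ weakly in $S^{k,2}(S^{2n+1})$ (the weak limit lies in $S^{k,2}$ and must vanish by the strong $L^2$ convergence), and $|F_j|^p d\xi \to \nu$ weakly as Borel measures on $S^{2n+1}$, where $\nu$ is a probability measure. Since each $g \in \overline{\PP}_{w,w'}$ is continuous on $S^{2n+1}$, the constraint \eqref{constraint} passes to the limit and yields $\int g \, d\nu = 0$ for every such $g$.

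Invoking the CR concentration-compactness principle stated as Lemma \ref{ccp}, the vanishing of the weak limit forces $\nu$ to be purely atomic, $\nu = \sum_i \nu_i \delta_{x_i}$, and produces nonnegative weights $\mu_i$ obeying the sharp local Sobolev inequality
\[
\nu_i^{2/p} \leq C_{n,2k} \mu_i, \qquad \sum_i \mu_i \leq \liminf_{j\to\infty} \int_{S^{2n+1}} \bar{F}_j P_k^{\theta_0} F_j \, d\xi \leq \Lambda.
\]
Hence $\nu \in \mathcal{M}^c_{w,w'}(S^{2n+1})$, and by definition of $\Theta$ (using $2/p = (Q-2k)/Q$),
\[
\Theta \leq \sum_i \nu_i^{2/p} \leq C_{n,2k} \sum_i \mu_i \leq C_{n,2k}\Lambda = \frac{C_{n,2k}}{C_{n,2k}/\Theta + \epsilon_0} < \Theta,
\]
a contradiction.

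The main obstacle I expect is not the final contradiction but the input from the CR concentration-compactness principle (Lemma \ref{ccp}): one has to verify that the Lions-type decomposition remains valid for the higher-order GJMS quadratic form $\bar{F}P_k^{\theta_0}F$ in the sub-Riemannian setting (where the energy is not pointwise nonnegative in an obvious way and standard cutoff/gradient estimates must be replaced by their Folland--Stein analogues), and that each atomic mass obeys the sharp bound $\nu_i^{2/p} \leq C_{n,2k}\mu_i$ with precisely the Hardy--Littlewood--Sobolev constant from \eqref{optimal}. Once Lemma \ref{ccp} is secured with that sharp constant, the proof above is essentially mechanical.
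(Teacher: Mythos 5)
Your proposal is correct and follows essentially the same route as the paper: argue by contradiction, normalize the sequence in $L^p$, deduce weak convergence to zero from the blow-up of the $L^2$ coefficient, pass to limiting measures, and invoke Lemma \ref{ccp} together with the definition of $\Theta(w,w';\tfrac{Q-2k}{Q},2n+1)$ to reach the contradiction. The only cosmetic difference is your bookkeeping with $\Lambda$ and $\liminf$ in place of the paper's direct use of $\alpha$ and $\sigma(S^{2n+1})$; the substance is identical, and your noted worry about the validity of the Lions decomposition for the higher-order CR quadratic form is exactly what Lemma \ref{ccp} is proved to address.
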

When $w+w'=1$, the condition $\int_{S^{2n+1}}gd\nu=0$ for all $g\in \overline{\PP}_{w,w'}$ is equivalent to the balanced conditions
\begin{equation}\label{balanced}
    \int_{S^{2n+1}}\xi_i d\nu=0, \quad i=1,\cdots, 2n+2,
\end{equation}
where $\{\xi_i\}_{i=1}^{2n+2}$ are coordinate functions on $\R^{2n+2}$. Therefore, the exact value of $\Theta({w,w'}; \theta,2n+1)$ is $2^{1-\theta}$ given in \cite{HW21}. We have the following corollary for balanced functions on $S^{2n+1}$.
\begin{corollary}\label{impro}
Denote $Q=2n+2$, $p=\frac{2Q}{Q-2k}$, $k\in\N$ with $k<n+1$. Then for any $\epsilon>0$, and $F\in S^{k,2}(S^{2n+1})$ with
\begin{equation}
    \int_{S^{2n+1}}\xi_i|F|^p d\xi=0, \quad i=1,\cdots, 2n+2,
\end{equation}
we have
\begin{equation}
\left(\int_{S^{2n+1}}|F|^p d\xi\right)^{\frac{2}{p}}\leqslant \left(\frac{{C}_{n,2k}}{2^{\frac{k}{n+1}}}+\epsilon\right) \int_{S^{2n+1}}\bar{F} P_k^{\theta_0} F d\xi+C(\epsilon)\int_{S^{2n+1}}|F|^2 d\xi,
\end{equation}
where $C(\epsilon)$ is a constant depending on $\epsilon$.
\end{corollary}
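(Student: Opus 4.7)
The strategy is to obtain Corollary \ref{impro} as a direct specialization of Theorem \ref{improve} to the bidegree regime $w+w'=1$. There are three concrete items to verify: (a) the balanced condition \eqref{balanced} on the measure $|F|^p d\xi$ is equivalent to the polynomial‐moment constraint \eqref{constraint}; (b) the combinatorial constant $\Theta(w,w';\theta,2n+1)$ equals $2^{1-\theta}$ in this range; and (c) the power $\theta=(Q-2k)/Q$ converts $2^{1-\theta}$ into the claimed $2^{k/(n+1)}$.

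For (a), the plan is to observe that $\overline{\PP}_{w,w'}$ with $(w,w')=(1,0)$ or $(0,1)$ consists of linear functions in $z_1,\dots,z_{n+1}$ and $\bar z_1,\dots,\bar z_{n+1}$ (together with the constants, which are killed by the mean‐zero normalization $\int_{S^{2n+1}} g\, d\xi = 0$). Writing $z_j = \xi_{2j-1} + i\xi_{2j}$, the real and imaginary parts of the $z_j$'s are exactly the coordinate functions $\xi_1,\dots,\xi_{2n+2}$ on $\R^{2n+2}\supset S^{2n+1}$, so \eqref{constraint} for all $g\in\overline{\PP}_{w,w'}$ with $w+w'=1$ is the same as \eqref{balanced} applied to $d\nu=|F|^p d\xi$ (normalized).

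For (b), the upper bound on $\Theta$ follows by considering the antipodal measure $\nu=\tfrac12\delta_{x}+\tfrac12\delta_{-x}$ for any $x\in S^{2n+1}$: this is balanced since $\int \xi_i\, d\nu = 0$, and it yields $\sum_i \nu_i^\theta = 2\cdot(1/2)^\theta = 2^{1-\theta}$. The matching lower bound is precisely the computation carried out in \cite{HW21} (which holds for any balanced atomic probability measure on a sphere, independent of the parity of the dimension), so I would simply invoke that result rather than reproduce it.

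For (c), with $Q=2n+2$ one has $\theta=(Q-2k)/Q=(n+1-k)/(n+1)$, hence $1-\theta = k/(n+1)$ and $2^{1-\theta}=2^{k/(n+1)}$. Substituting this value into the coefficient on the right-hand side of \eqref{asi} gives the stated inequality. The only potential obstacle is the verification in step (b), but since the constant $2^{1-\theta}$ is explicitly computed in \cite{HW21} for the exact same variational problem on a sphere, this reduces to a citation; (a) and (c) are routine identifications.
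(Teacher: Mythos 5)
Your proposal is correct and follows essentially the same route as the paper: specialize Theorem \ref{improve} to $w+w'=1$, identify $\overline{\PP}_{w,w'}$ with the span of the coordinate functions so that \eqref{constraint} becomes the balanced condition \eqref{balanced}, take $\Theta(w,w';\theta,2n+1)=2^{1-\theta}$ from \cite{HW21}, and compute $1-\theta=k/(n+1)$ for $\theta=(Q-2k)/Q$. The extra detail you supply (the antipodal measure giving the upper bound for $\Theta$, and the observation that the degree-one case of the $\PP_l$ framework of \cite{HW21} imposes the same constraints) is consistent with, and slightly more explicit than, the paper's brief justification.
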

The author recently learned that P. Ho obtained a similar result independently. In his preprint, he gets improved inequalities for the embedding $S^{1,r}(S^{2n+1})\hookrightarrow L^{\frac{Qr}{Q-r}}(S^{2n+1})$ with $0<r<Q$.  
\begin{remark}
On the standard unit sphere $(S^{n}, g_0)$, it is known in \cite[Theorem 2.45]{Aubin76} that for all $\varphi\in H^{1,2}(S^{n})$ satisfying 
\begin{equation}
    \int_{S^{n}}\xi_i|\varphi|^{\frac{2n}{n-2}} d\xi=0, \quad i=1,\cdots, n+1,
\end{equation}
we have
\begin{equation}\label{Aubin}
    \|\varphi\|^2_{\frac{2n}{n-2}}\leqslant \left(\frac{K_{n,2}}{2^{\frac{2}{n}}}+\epsilon\right)\|\nabla \varphi\|^2_2+C(\epsilon)\|\varphi\|^2_2,
\end{equation}
where $K_{n,2}$ is the sharp constant of the Sobolev inequality on $\R^{n+1}$ and $C(\epsilon)$ is a constant depending on $\epsilon$. Corollary \ref{impro} is a CR analogue of Aubin's estimate which works for all orders $2k<Q$.
\end{remark}

\begin{remark} 
If we replace $\PP_{w,w'}$ by $\PP_l$ defined in \cite{HW21} 
\begin{center}
$\PP_l:=\{$all polynomials on $\R^{2n+2}$ with degree at most $l\}$
\end{center}
with $w+w'=l\in \N$, we can adapt the proof of Theorem \ref{improve} to obtain 
\begin{equation}\label{asi2}
\begin{split}
\left(\int_{S^{2n+1}}|F|^p d\xi\right)^{\frac{2}{p}}&\leqslant \left(\frac{{C}_{n,2k}}{\Theta(l;\frac{Q-2k}{Q},2n+1)}+\epsilon\right) \int_{S^{2n+1}}\bar{F} P_k^{\theta_0} F d\xi\\
&+C(\epsilon)\int_{S^{2n+1}}|F|^2 d\xi,
\end{split}
\end{equation}
under the restriction
\begin{equation}\label{constraint2}
    \int_{S^{2n+1}}g|F|^p d\xi= 0
\end{equation}
for all $g\in \overline{\PP}_l$, where $\Theta(l;\frac{Q-2k}{Q},2n+1)$ and $\overline{\PP}_l$ are analogues associated with $\PP_l$. Since $\PP_{w,w'}$ is a proper subspace of $\PP_l$ when $l\geqslant 2$, condition (\ref{constraint2}) is more restrictive than (\ref{constraint}). Consequently, we have
\begin{displaymath}
    \frac{{C}_{n,2k}}{\Theta(l;\frac{Q-2k}{Q},2n+1)}\leqslant \frac{{C}_{n,2k}}{\Theta(w,w';\frac{Q-2k}{Q},2n+1)}.
\end{displaymath}
\end{remark}

\begin{remark}
	Some known exact values of $\Theta(l;\theta,n-1)$ are $\Theta(1;\theta,n-1)=2^{1-\theta}$ and $\Theta(2;\theta,n-1)=(n+1)^{1-\theta}$ by Hang--Wang \cite{HW21}, and $\Theta(3;\theta,n-1)=(2n)^{1-\theta}$ by Putterman \cite[Theorem 5.1]{Putterman20}, whose method is related to the idea of deriving cubature formulas, such as the technique of reproducing kernels on spheres, etc. In Proposition \ref{value}, we compute $\Theta (1,1; \theta, 2n+1)$ following the idea in \cite{HW21}. Indeed, it has been proved in \cite[Proposition 3.1]{Putterman20} that $\Theta(l;\theta,n-1)$ can only be achieved by a Dirac probability measure supported on finitely many points. This directly implies that the infimum for $\Theta(l;\theta,n-1)$ is a minimum, which follows from \cite[Corollary 3.2]{Putterman20} or
	the proof of Proposition \ref{value} below implicitly.  
\end{remark}

As a direct application, we give a new proof of sharp CR Sobolev inequalities on $S^{2n+1}$.

\begin{theorem}\label{class}
Let $(S^{2n+1}, \theta_0)$ be the CR unit sphere with the volume element $d\xi=\theta_0\wedge (d\theta_0)^n$. Denote $Q=2n+2$ and $k\in\N$ with $k<n+1$. Then $u$ is a positive minimizer of (\ref{sharpSo}) if and only if
\begin{displaymath}
u(\eta)=\frac{C}{|1-\xi \cdot \bar{\eta}|^{\frac{Q-2k}{2}}}, \quad \eta\in S^{2n+1}
\end{displaymath}
for some $C\in \C, \xi\in \C^{n+1}, |\xi|<1$.
\end{theorem}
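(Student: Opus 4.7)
I would follow the Frank--Lieb scheme in three steps, with Corollary \ref{impro} providing the rigidity mechanism.

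\emph{Step 1 (``if'' direction).} The constant function on $S^{2n+1}$ achieves equality in (\ref{sharpSo}) by direct computation, since constants are eigenfunctions of $P_k^{\theta_0}$ corresponding to its lowest eigenvalue and the resulting Sobolev quotient matches $C_{n,2k}$. The full family of bubbles $u(\eta)=C|1-\xi\cdot\bar\eta|^{-(Q-2k)/2}$ is precisely the orbit of the constant function under the CR automorphism group of $S^{2n+1}$ acting with conformal weight $(Q-2k)/(2Q)$; explicitly, each bubble arises by conjugating the constant through a CR automorphism, tracked through the Cayley transform $\mathcal{C}$. Since both sides of (\ref{sharpSo}) are CR invariant, every such bubble is again a minimizer.

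\emph{Step 2 (conformal centering).} For the ``only if'' direction, let $u>0$ be a positive minimizer with $\|u\|_p=1$. A topological degree argument on the CR automorphism group modulo isometries (identified with the open unit ball in $\C^{n+1}$) yields a CR automorphism $\Phi_0$ such that the transformed function $\tilde u:=(u\circ\Phi_0)\,|J_{\Phi_0}|^{(Q-2k)/(2Q)}$ satisfies the balanced conditions $\int_{S^{2n+1}}\xi_i|\tilde u|^p\,d\xi=0$ for $i=1,\dots,2n+2$. The CR invariance of (\ref{sharpSo}) guarantees $\tilde u$ is again a positive minimizer, which is the natural class to which Corollary \ref{impro} applies.

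\emph{Step 3 (rigidity via Corollary \ref{impro}).} Since $\tilde u$ is a balanced minimizer, the sharp identity $\|\tilde u\|_p^2 = C_{n,2k}\int_{S^{2n+1}} \bar{\tilde u}\,P_k^{\theta_0}\tilde u\,d\xi$ combined with Corollary \ref{impro} yields, for every sufficiently small $\epsilon>0$,
\begin{displaymath}
\bigl(C_{n,2k}(1-2^{-k/(n+1)})-\epsilon\bigr)\int_{S^{2n+1}}\bar{\tilde u}\,P_k^{\theta_0}\tilde u\,d\xi \leq C(\epsilon)\|\tilde u\|_2^2.
\end{displaymath}
Combining this with the Euler--Lagrange equation $P_k^{\theta_0}\tilde u = C_{n,2k}^{-1}\tilde u^{p-1}$ and the spectral information that the lowest eigenspace of $P_k^{\theta_0}$ consists of constants, one forces $\tilde u$ to be constant, and inverting Step 2 identifies $u$ with a bubble of the claimed form. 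The main obstacle is this final rigidity step: because the $L^2$-remainder $C(\epsilon)\|\tilde u\|_2^2$ does not vanish as $\epsilon\to 0$, one cannot argue by a direct limit; instead one must use the Euler--Lagrange equation to convert the improved inequality into a sharp bound on the Dirichlet energy of nonconstant balanced critical points, exploiting the strict spectral gap between the constant eigenspace and the next eigenspace of $P_k^{\theta_0}$ on balanced functions to rule out any nonconstant positive balanced minimizer.
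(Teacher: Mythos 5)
Your Steps 1 and 2 match the paper: the bubbles are the CR-automorphism orbit of the constant, and a centering lemma (the paper cites Lemma B.1 of \cite{FL10}, packaged as Corollary \ref{transl}) lets one assume the minimizer is balanced. The gap is in Step 3, which is where the entire classification actually lives. Corollary \ref{impro} cannot deliver rigidity: the inequality
\begin{displaymath}
\bigl(C_{n,2k}(1-2^{-k/(n+1)})-\epsilon\bigr)\int_{S^{2n+1}}\bar{\tilde u}\,P_k^{\theta_0}\tilde u\,d\xi \leqslant C(\epsilon)\|\tilde u\|_2^2
\end{displaymath}
is satisfied by the constant function and by any fixed smooth function once $C(\epsilon)$ is large, since $C(\epsilon)$ is an uncontrolled constant; it only says the Dirichlet energy of a balanced minimizer is finite relative to its $L^2$ norm, which carries no information that could distinguish constants from nonconstant balanced critical points. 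Your proposed repair via a ``strict spectral gap of $P_k^{\theta_0}$ on balanced functions'' does not go through either: the Euler--Lagrange equation $P_k^{\theta_0}\tilde u = Y_k\,\tilde u^{p-1}$ is nonlinear, and the balanced condition constrains the measure $|\tilde u|^p\,d\xi$, not the spectral decomposition of $\tilde u$ itself, so one cannot project $\tilde u$ onto eigenspaces of $P_k^{\theta_0}$ and conclude. In the paper, Corollary \ref{impro} is used only to produce a strongly convergent (centered) minimizing sequence, i.e.\ for existence, not for classification.

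What is missing is the Frank--Lieb mechanism that the paper develops: (i) the second-variation spectral estimate (Proposition \ref{spectral}), obtained by testing the local-minimality inequality with $x_j u$ and $y_j u$ where $z_j=x_j+iy_j$, which gives $\sum_j\int \bar z_j u\,[P_k^{\theta_0},z_j](u)\,d\xi\geqslant (p-2)\int u P_k^{\theta_0}(u)\,d\xi$ for balanced $u$; (ii) the commutator identity $\sum_j \bar z_j[P_{w,w'},z_j]=-k(w'-k+1)P_{w-1,w'}$ (Theorem \ref{commutator}), proved via the flat ambient Lorentz--K\"ahler metric; and (iii) the factorization $P_{w,w}=\prod L_{k-2j-1}$, $P_{w-1,w}=\prod_{j\leqslant k-2} L_{k-2j-1}$ (Theorem \ref{fact}), which shows that $(p-2)P_{w,w}+k(w-k+1)P_{w-1,w}$ is a nonnegative operator whose kernel is exactly the constants. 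Combining (i)--(iii) gives $0\geqslant \int u\bigl((p-2)P_{w,w}+k(w-k+1)P_{w-1,w}\bigr)(u)\,d\xi$ and hence $u\equiv\mathrm{const}$, after which undoing the centering yields the bubble. None of these three ingredients, nor any substitute for them, appears in your Step 3, so as written the proposal does not prove the theorem.
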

The key ideas in the proof of Theorem \ref{class} are the following. After using the CR automorphism group $\mathcal{A}(S^{2n+1})$ (see, for example, \cite{BFM07}), without loss of generality, we may assume that any minimizing sequence $\{F_i\}$ in $S^{k,2}(S^{2n+1})$ is balanced. By the Lieb trick in \cite[Lemma 2.6]{Lieb83} and the improved Sobolev inequalities in Corollary \ref{impro}, up to a subsequence, we show that any balanced minimizing sequence $\{F_i\}$ converges strongly to an optimizer $F\in S^{k,2}(S^{2n+1})$ of (\ref{sharpSo}). Next, we follow the Frank--Lieb argument developed in \cite{Case19} to classify the optimizers. The Frank--Lieb argument involves three elements. First, the assumption of a local miminizer implies, via the second variation, a nice spectral estimate; see Proposition \ref{spectral} for a precise statement. Second, CR covariance implies that one can assume that the optimizer $F$ satisfies the balanced condition (\ref{balanced}), and in particular use first spherical harmonics as test functions in the previous spectral estimate. Third, one deduces that a balanced positive local minimizer is constant by computing a particular commutator formula involving the relevant CR GJMS operator and a first spherical harmonic. The commutator formula needed for Theorem \ref{class} is given in Theorem \ref{commutator}.

In the same spirit as Theorem \ref{improve}, we prove Aubin-type almost Sobolev inequalities involving GJMS operators on general CR manifolds. 

\begin{theorem}\label{almost2}
Let $(N, T^{1,0}N,\theta)$ be a compact strictly pseudoconvex manifold of dimension $2n+1$. Then for any $F\in S^{k,2}(N)$, $p=\frac{2Q}{Q-2k}$, we have
\begin{equation}\label{Sobolev2}
    \left(\int_{N}|F|^p d\zeta\right)^{\frac{2}{p}}\leqslant \left(C_{n,2k}+\epsilon\right)\int_{N}\bar{F} \tilde{P}^{\theta}_k F d\zeta+C(\epsilon)\int_N |F|^2 d\zeta,
\end{equation}
where $d\zeta=\theta\wedge d\theta^n$, $C_{n,2k}$ is the optimal constant on $\H^n$ introduced in (\ref{optimal}), and $\tilde{P}^{\theta}_k$ is the principle part of $P^{\theta}_k$ given in (\ref{prin}).
\end{theorem}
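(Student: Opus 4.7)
The plan is to reduce Theorem \ref{almost2} to the sharp Heisenberg inequality (\ref{optimal}) via a localization and freezing-coefficients argument, in the spirit of Aubin's classical proof on Riemannian manifolds. First, I would construct a finite cover of $N$ by coordinate balls $\{U_i\}$, each small enough to admit CR normal coordinates (e.g.\ pseudo-Hermitian normal coordinates in the sense of Jerison--Lee) centered at some $p_i$, together with a smooth partition of unity $\{\eta_i^2\}$ subordinate to $\{U_i\}$ with $\sum_i \eta_i^2 \equiv 1$. The key elementary inequality is $\|F\|_p^2 \leqslant \sum_i \|\eta_i F\|_p^2$, which follows from $|F|^2 = \sum_i \eta_i^2 |F|^2$ together with the triangle inequality in $L^{p/2}$.

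Next I would establish the local statement: for every $\epsilon > 0$ there exists $\delta > 0$ such that for any $G \in S^{k,2}(N)$ supported in a ball of radius $\delta$ around some $p_i$,
\begin{equation*}
    \left(\int_N |G|^p\, d\zeta\right)^{2/p} \leqslant (C_{n,2k}+\epsilon)\int_N \bar{G}\,\tilde{P}_k^\theta G\, d\zeta + C(\epsilon)\int_N |G|^2\, d\zeta.
\end{equation*}
To prove this, identify the ball with a neighborhood of the origin in $\H^n$ via CR normal coordinates; in these coordinates the volume form $\theta \wedge d\theta^n$ and the principal symbol of $\tilde{P}_k^\theta$ agree with the Heisenberg ones at $p_i$, with errors $O(\delta^\alpha)$ on the support of $G$. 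Applying the sharp Sobolev inequality (\ref{optimal}) on $\H^n$ to the transported function, and estimating the difference $\tilde{P}_k^\theta - P_k^{\theta_c}$ by the Folland--Stein norm $\|G\|_{S^{k,2}}^2$ times $o_\delta(1)$, the leading constant gives $C_{n,2k} + \epsilon$ provided $\delta$ is small enough, while any extra terms of order $<2k$ are absorbed into $C(\epsilon)\|G\|_2^2$ via the sub-elliptic interpolation inequality $\|G\|_{S^{j,2}}^2 \leqslant \eta \|G\|_{S^{k,2}}^2 + C_\eta \|G\|_2^2$ for $j<k$.

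Applying the local statement with $G = \eta_i F$ and summing yields
\begin{equation*}
\|F\|_p^2 \leqslant (C_{n,2k}+\epsilon)\sum_i \int_N \overline{\eta_i F}\,\tilde{P}_k^\theta(\eta_i F)\,d\zeta + C(\epsilon)\sum_i \int_N \eta_i^2 |F|^2\,d\zeta.
\end{equation*}
The last term equals $C(\epsilon)\|F\|_2^2$ by $\sum \eta_i^2 = 1$. For the first sum I would expand $\tilde{P}_k^\theta(\eta_i F)$ using the Leibniz rule and group the terms: the ``diagonal'' piece yields $\sum_i \int \eta_i^2 \bar{F}\,\tilde{P}_k^\theta F = \int \bar{F}\,\tilde{P}_k^\theta F$, while all commutator terms $[\tilde{P}_k^\theta,\eta_i]$ are CR differential operators of order at most $2k-1$ and can thus be bounded by $\|F\|_{S^{k-1/2,2}}^2$, hence absorbed into $C(\epsilon)\|F\|_2^2 + \epsilon \int \bar{F}\,\tilde{P}_k^\theta F$ via a second application of sub-elliptic interpolation and Young's inequality.

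The main obstacle is the commutator/interpolation bookkeeping when $k \geqslant 2$: one must show that every term arising from $[\tilde{P}_k^\theta,\eta_i]$, and every discrepancy between $\tilde{P}_k^\theta$ in normal coordinates and the Heisenberg operator $P_k^{\theta_c}$, is genuinely of order strictly less than $2k$ so that it can be split into $\epsilon \int \bar{F}\,\tilde{P}_k^\theta F + C(\epsilon)\|F\|_2^2$. This uses the pseudo-local structure of the GJMS operator together with sub-elliptic regularity on the Folland--Stein scale, both of which should already be available in the framework used earlier in the paper.
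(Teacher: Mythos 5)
Your argument is correct in outline, but it takes a genuinely different route from the paper. The paper proves Theorem \ref{almost2} by contradiction, reusing the concentration--compactness machinery already established for Theorem \ref{improve}: if the inequality fails one extracts a normalized sequence with $\|F_j\|_p=1$, $\int \bar F_j\tilde P^\theta_k F_j\,d\zeta<1/\alpha$ and $\|F_j\|_2\to 0$, hence $F_j\rightharpoonup 0$ in $S^{k,2}(N)$; Lemma \ref{ccp} then forces the limit measure $\nu$ to be purely atomic with $\nu_i^{2/p}\leqslant C_{n,2k}\sigma_i$, and concavity of $x\mapsto x^{2/p}$ gives $1\leqslant C_{n,2k}/\alpha$, contradicting $\alpha=C_{n,2k}+\epsilon$. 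Your proposal is instead the direct Aubin-style proof: partition of unity with $\sum_i\eta_i^2=1$, the elementary inequality $\|F\|_p^2\leqslant\sum_i\|\eta_i F\|_p^2$, freezing coefficients in pseudohermitian normal coordinates, and transplanting the sharp inequality (\ref{optimal}) from $\H^n$. What the paper's route buys is brevity and a clean division of labor, since all the delicate local analysis (normal coordinates, comparison of $\tilde P^\theta_k$ with $P^{\theta_c}_k$, blow-up) is already packaged inside Lemma \ref{ccp}; what your route buys is a self-contained, constructive proof that does not invoke concentration compactness at all and makes the dependence of $C(\epsilon)$ on the geometry explicit. The one point you should make explicit rather than leave implicit is the subelliptic coercivity (G{\aa}rding-type) estimate $\|G\|_{S^{k,2}}^2\leqslant C\bigl(\int_N\bar G\tilde P^\theta_k G\,d\zeta+\|G\|_2^2\bigr)$: every absorption step in your argument --- the $O(\delta)$ errors of full order $2k$ from freezing coefficients, and the $\epsilon\|F\|_{S^{k,2}}^2$ terms coming from the commutators $[\tilde P^\theta_k,\eta_i]$ --- converts an $S^{k,2}$-seminorm error into $\epsilon\int\bar F\tilde P^\theta_k F+C(\epsilon)\|F\|_2^2$, and this is legitimate only because $\tilde P^\theta_k$ factors as in (\ref{prin}) into operators $\Delta_b+i\alpha T$ with $|\alpha|\leqslant k-1<n$, each maximally hypoelliptic by Folland--Stein theory; with that estimate quoted, your bookkeeping for the order-$\leqslant 2k-1$ terms goes through as you describe.
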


In the Yamabe problem, it is known \cite{LP87} that $Y(M^n)\leqslant Y(S^n)$, where the equality holds if and only if $(M,g)$ is conformal to the standard sphere $(S^n, g_c)$. When $Y(M^n)<Y(S^n)$, the Yamabe constant $Y(M^n)$ can be attained by a positive minimizer; the proof of this relies on Aubin's almost sharp inequality \cite[Theorem 2.3]{LP87}. In the CR Yamabe problem, Jerison and Lee proved the existence of minimizers in \cite[Theorem 3.4]{JL87}. However, they did not have an almost sharp Sobolev inequality on general CR manifolds. To overcome this technical difficulty, they proved the existence by contradiction by lifting a divergent sequence of functions to $\H^n$ in CR normal coordinates.

With help of Theorem \ref{almost2}, we obtain the existence of minimizers in $C^{2k}(N)$ of higher-order CR Yamabe-type problems when $Y_k(N)<Y_k(\H^n)$. In particular, this provides a new, direct proof of Theorem 3.4(c) in \cite{JL87}. However, when $k\geqslant 2$, it is difficult to know whether the minimizer is positive. 
\begin{theorem}\label{highCR}
Let $(N^{2n+1}, T^{1,0}N)$ be a compact strictly pseudoconvex CR manifold with contact form $\theta$. Denote $Q=2n+2$ and $k\in\N$ with $k<n+1$. We consider the extremal problem
\begin{equation}\label{constant}
    Y_k(N)=\inf \left\{A^{\theta}_k(u);u\in S^{k,2}(N;\R),  B^{\theta}_k(u)=1 \right\},
\end{equation}
where 
\begin{align}\label{AB}
    A^{\theta}_k(u)& =\int_N u P_k^{\theta}(u) \theta\wedge d\theta^n,\\
    B^{\theta}_k(u)& =\int_N |u|^p \theta\wedge d\theta^n, \quad p=\frac{2Q}{Q-2k}.
\end{align} 
If $Y_k(N)<Y_k(\H^n)=C_{n,2k}^{-1}$ and $\{F_i\}$ is a minimizing sequence of $Y_k(N)$ with $B^{\theta}_k(F_i)=1$, then, after passing to a subsequence, we can find $F\in S^{k,2}(N;\R)$ such that $F_i\to F$ strongly in $S^{k,2}(N)$. In particular, $F$ is a minimizer of $Y_k(N)$ in $C^{2k}(N)$.
\end{theorem}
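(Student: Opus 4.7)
The plan is to run a standard concentration–compactness argument in which the almost sharp inequality of Theorem \ref{almost2}, combined with the strict gap $Y_k(N)<Y_k(\H^n)$, excludes loss of mass at infinity (here ``infinity'' meaning the CR blow-up scale). First I would set up coercivity: since $P^\theta_k$ is self-adjoint with principal part $\tilde P^\theta_k$, a G\aa rding-type estimate produces constants $c_1>0$, $c_2\geq 0$ with
\begin{equation*}
A^\theta_k(u)+c_2\|u\|_2^2\geq c_1\|u\|_{S^{k,2}}^2.
\end{equation*}
Combined with $B^\theta_k(F_i)=1$ and the chain of embeddings $S^{k,2}(N)\hookrightarrow L^p(N)\hookrightarrow L^2(N)$, this bounds $\{F_i\}$ in $S^{k,2}(N)$. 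After passing to a subsequence, $F_i\rightharpoonup F$ weakly in $S^{k,2}(N)$, and by the subelliptic Rellich–Kondrachov theorem $F_i\to F$ strongly in $L^q(N)$ for every $q<p$, in particular in $L^2(N)$.

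The crucial step is ruling out $F\equiv 0$. Assuming $F\equiv 0$, the operator $P^\theta_k-\tilde P^\theta_k$ has Heisenberg pseudodifferential order strictly less than $2k$, so its quadratic form is compact on $S^{k,2}(N)$ and its value on $F_i$ tends to zero. Theorem \ref{almost2} applied to $F_i$ therefore gives
\begin{equation*}
1=\|F_i\|_p^2\leq (C_{n,2k}+\epsilon)\int_N\bar F_i\tilde P^\theta_k F_i\,d\zeta+C(\epsilon)\|F_i\|_2^2=(C_{n,2k}+\epsilon)Y_k(N)+o(1).
\end{equation*}
Sending $i\to\infty$ and then $\epsilon\to 0^+$ yields $1\leq C_{n,2k}Y_k(N)$, contradicting $Y_k(N)<Y_k(\H^n)=C_{n,2k}^{-1}$; thus $F\not\equiv 0$.

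To upgrade to strong convergence, I would set $G_i:=F_i-F$, so $G_i\rightharpoonup 0$ in $S^{k,2}(N)$ and $G_i\to 0$ in $L^2(N)$. The Brezis–Lieb lemma gives $1=\|F\|_p^p+\|G_i\|_p^p+o(1)$, while expanding the bilinear form $A^\theta_k$ and absorbing the lower order contributions via compactness yields $A^\theta_k(F_i)=A^\theta_k(F)+A^\theta_k(G_i)+o(1)$. Writing $a:=\|F\|_p^p\in(0,1]$, applying Theorem \ref{almost2} to $G_i$, and combining with $A^\theta_k(F)\geq Y_k(N)a^{2/p}$, I would derive
\begin{equation*}
Y_k(N)=\lim_{i\to\infty}A^\theta_k(F_i)\geq Y_k(N)\,a^{2/p}+Y_k(\H^n)\,(1-a)^{2/p}.
\end{equation*}
For $a<1$, the strict gap $Y_k(\H^n)>Y_k(N)$ combined with the strict superadditivity $a^{2/p}+(1-a)^{2/p}>1$ (using $2/p<1$) yields a contradiction, so $a=1$. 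Hence $\|G_i\|_p\to 0$, $\|F\|_p=1$, and $A^\theta_k(F)=Y_k(N)$, so $F$ is a minimizer; the equivalence of norms $\|\cdot\|_{S^{k,2}}^2\sim A^\theta_k(\cdot)+c_2\|\cdot\|_2^2$ together with $A^\theta_k(F_i)\to A^\theta_k(F)$ then upgrades weak convergence to strong convergence in $S^{k,2}(N)$. Finally, $F$ satisfies the Euler–Lagrange equation $P^\theta_k F=Y_k(N)|F|^{p-2}F$, and standard subelliptic regularity for CR GJMS operators yields $F\in C^{2k}(N)$.

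The main obstacle I anticipate is the bookkeeping of lower order contributions: one must verify that $P^\theta_k-\tilde P^\theta_k$ has strictly lower Heisenberg order so that its quadratic form is compact on $S^{k,2}(N)$, and one needs a Brezis–Lieb-type splitting of the full form $A^\theta_k$ that is compatible with weak $S^{k,2}$ and strong $L^2$ convergence. Once this is in place, what remains is the classical Aubin-type threshold analysis driven by the strict inequality $Y_k(N)<Y_k(\H^n)$.
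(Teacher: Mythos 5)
Your proposal is correct and follows the same overall strategy as the paper: bound the minimizing sequence, extract a weak limit $F$, rule out $F\equiv 0$ via Theorem~\ref{almost2} and the strict gap, split the energy with Brezis--Lieb plus compactness of the lower-order terms of $P^\theta_k$, and then upgrade to strong convergence. The one place you genuinely diverge is the final threshold analysis. The paper bounds $\lim_i \int_N G_i\,\tilde P^\theta_k(G_i)$ from below by $Y_k(N)\lim_i\|G_i\|_p^2$ using the definition of $Y_k(N)$, then splits into the cases $Y_k(N)\geq 0$ and $Y_k(N)<0$ before invoking the concavity of $x\mapsto x^{2/p}$ to force $\|G_i\|_p\to 0$. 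You instead bound $\lim_i\int_N G_i\,\tilde P^\theta_k(G_i)$ from below by $Y_k(\H^n)\lim_i\|G_i\|_p^2$ via a second application of Theorem~\ref{almost2} to $G_i$ (using $\|G_i\|_2\to 0$ and sending $\epsilon\to 0$), which together with $A^\theta_k(F)\geq Y_k(N)a^{2/p}$ gives $Y_k(N)\geq Y_k(N)a^{2/p}+Y_k(\H^n)(1-a)^{2/p}$; for $a\in(0,1)$ the strict superadditivity $a^{2/p}+(1-a)^{2/p}>1$ and the gap $Y_k(\H^n)>Y_k(N)$ yield a contradiction uniformly in the sign of $Y_k(N)$. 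This is a mild but real simplification: it eliminates the sign case-split and uses the hypothesis $Y_k(N)<Y_k(\H^n)$ twice (for $F\neq 0$ and for $a=1$) instead of once. Your closing remark on regularity compresses the paper's chain (Lemma~\ref{regularity} giving $F\in L^r$ for all $r$, then Folland's subelliptic regularity to get $F\in S^{2k,r}$, the embedding into $\Gamma_\beta$, and a bootstrap into $C^{2k}$), but points at the correct ingredients.
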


In the CR Yamabe problem, the infimum may not be achieved when the CR Yamabe constant $Y_1(N)=Y_1(\H^n)$. Recently, in \cite{CMY19}, authors exhibit examples of compact three-dimensional CR manifolds of positive Webster class for which the pseudohermitian mass \cite{CMY17} is negative, and for which $Y_1(N)$ is not attained. This phenomenon is in striking contrast to the Riemannian case.

This article is organized as follows.

In Section 2, we review some basic concepts on CR manifolds, ambient spaces and CR GJMS operators. In Section 3, we prove Theorem \ref{improve} and compute $\Theta (1,1; \theta, 2n+1)$ following the idea in \cite{HW21}. Section 4 deals with the commutator identity and spectral estimate needed for the Frank--Lieb argument. As a direct application, we give a
new proof of sharp CR Sobolev inequalities on $S^{2n+1}$. In Section 5, we provide the proof of Theorem \ref{almost2}. In Section 6, using Lemma \ref{ccp}, we prove Theorem \ref{highCR}.

{\bf{Acknowledgements.}} I would like to express my deep gratitude to my advisor, Dr. Jeffrey S. Case, for the patient guidance and useful critiques of this work. I also would like to thank Pak Tung Ho for making his work available and Nan Wu for his encouragement.

\section{Preliminaries}
\subsection{CR manifolds}
Let $N$ be a smooth $(2n+1)$-dimensional manifold without boundary. A CR structure on $N$ is a complex $n$-dimensional subbundle $T^{1,0}N$ of the complexified tangent bundle $\C TN$ such that
\begin{itemize}
    \item $T^{1,0}N\cap T^{0,1}N=\{0\}$, where $T^{0,1}N=\overline{T^{1,0}N}$;
    \item $[\Gamma(T^{1,0}N),\Gamma(T^{1,0}N)]\subset \Gamma(T^{1,0}N).$
\end{itemize}
For example, if $N$ is a real hypersurface in a complex manifold $X$, then $N$ has the natural CR structure
\begin{displaymath}
    T^{1,0}N=(TN\otimes \C)\cap T^{1,0}X.
\end{displaymath}
We set $HN=\mathrm{Re}$ $(T^{1,0}N\oplus T^{0,1}N)$. In the following, assume that there exists a global nonvanishing $1$-form $\theta$ that annihilates $HN$. The {\textit{Levi form}} $L_{\theta}$ with respect to $\theta$ is the Hermitian form on $T^{1,0}N$ defined by
\begin{displaymath}
    L_{\theta}(V,\overline{W})=\langle -id\theta, V\wedge \overline{W} \rangle, \quad V,W\in T^{1,0}N.
\end{displaymath}
We consider only {\textit{strictly pseudoconvex CR manifolds}}, i.e. CR manifolds that have a positive definite Levi form for a suitable choice of $\theta$. In this case, $\theta$ defines a contact structure on $N$, and we call $\theta$ a contact form. We denote by $T$ the {\textit{Reeb vector field}} with respect to $\theta$; that is, the unique vector field satisfying
\begin{displaymath}
    \theta(T)=1, \quad \iota_T d\theta=0.
\end{displaymath}
Then the tangent bundle has the decomposition $TN=HN\oplus \R T$.
Take a local frame $(Z_{\alpha})$ of $T^{1,0}N$. Then we have a local frame $(T, Z_{\alpha}, Z_{\bar{\beta}}=\overline{Z_{\beta}})$ of $\C TN$, and take the dual frame $(\theta, \theta^{\alpha}, \theta^{\bar{\beta}})$. For this frame, the $2$-form $d\theta$ can be written
\begin{equation}\label{metric}
    d\theta=ih_{\alpha\bar{\beta}}\theta^{\alpha}\wedge \theta^{\bar{\beta}},
\end{equation}
where $(h_{\alpha\bar{\beta}})$ is a positive Hermitian matrix. In the following, we will use $h_{\alpha\bar{\beta}}$ and its inverse $h^{\alpha\bar{\beta}}$ to lower and raise indices of various tensors.

A $pseudohermitian$ $structure$ on $N$ is a CR structure together with a given contact form $\theta$. With this choice, $N$ is equipped with a natural volume form $\theta\wedge d\theta^n$. The inner product $L_{\theta}$ determines an isomorphism $HN \cong HN^{*}$, which in turn determines a dual form $L^{*}_{\theta}$ on $HN^{*}$, which extends naturally to $T^{*}N$. This defines a norm $|\omega|_{\theta}$ on real $1$-forms $\omega$, which satisfies
\begin{displaymath}
    |\omega|^2_{\theta}=L_{\theta}^{*}(\omega,\omega)=2\sum^{n}_{j=1} \left|\omega(Z_j)\right|^2,
\end{displaymath}
where $Z_1,\cdots, Z_n$ form an orthonormal basis for $T^{1,0}N$ with respect to the Levi form.

The $sub$-$Laplacian$ $operator$ $\Delta_b$ is defined on real functions $u\in C^{\infty}(N)$ by
\begin{equation}\label{subla}
    \int_N \left(\Delta_b u\right)v \theta\wedge d\theta^n=\int_N L^{*}_{\theta}(du,dv)\theta\wedge d\theta^n, \forall v\in C^{\infty}_{0}(N).
\end{equation}
Since evidently $|\theta|_{\theta}=0$, $L^{*}_{\theta}$ is degenerate on $T^{*}N$. Kohn's work \cite{Kohn65} shows that the operator $\Delta_b$ is subelliptic rather than elliptic. It is known \cite{Lee86} that $\Delta_b=\mathrm{Re} \Box_b$, where $\Box_b$ is the Kohn Laplacian acting on functions.

\subsection{Ambient space and ambient construction}
Let $X$ be an $(n+1)$-dimensional complex manifold and $\pi_{\mathcal{X}}:\mathcal{X}=K^{\times}_{X}\to X$ the total space of the canonical bundle of $X$ with the zero section removed. For $\mu\in \C^{\times}$, define the dilation $\boldsymbol{{\delta}}_{\mu}:\mathcal{X}\to \mathcal{X}$ by the scalar multiplication $\boldsymbol{{\delta}}_{\mu}(\varsigma)=\mu^{n+2}\varsigma$ for $\varsigma\in \mathcal{X}$. Denote by $Z$ the holomorphic vector field generating $\boldsymbol{{\delta}}_{\mu}$; that is, $\boldsymbol{{\delta}}_{\mu}=(d/d\mu)|_{\mu=1}\boldsymbol{{\delta}}_{\mu}^{*}$. A smooth function $\boldsymbol{f}$ on an open set of $\mathcal{X}$ is said to be {\textit{homogeneous of bidegree}} $(w,w')$ for $w, w'\in \C$ satisfying $w-w'\in\Z$ if $Z\boldsymbol{f}=w\boldsymbol{f}$ and $\overline{Z}\boldsymbol{f}=w'\boldsymbol{f}$. Without further comment, we will assume implicitly  that $w, w'\in \C$ and $w-w'\in\Z$ whenever we write $\tilde{\mathcal{E}}(w, w')$ for the sheaf of smooth homogeneous functions of bidegree $(w,w')$. To simplify notation, write $\tilde{\mathcal{E}}(w)=\tilde{\mathcal{E}}(w, w)$.

Let $N$ be a strictly pseudoconvex real hypersurface in $X$ and $\mathcal{N}=\pi_{\mathcal{X}}^{-1}(N)$. Then $\mathcal{N}$ is a strictly pseudoconvex real hypersurface in  $\mathcal{X}$. Denote by $\mathcal{E}(w, w')$ the sheaf of homogeneous functions of bidegree $(w,w')$ on $\mathcal{N}$. 

For a defining function ${\boldsymbol{\rho}}\in \tilde{\mathcal{E}}(1)$ of $\mathcal{N}$, the $(1,1)$-form $dd^c {\boldsymbol{\rho}}$ defines a Lorentz-K\"ahler metric ${\boldsymbol{g[\rho]}}$ in a neighborhood of $\mathcal{N}$, where $d^c=(i/2)(\overline{\partial}-\partial)$. We normalize ${\boldsymbol{\rho}}$ by a complex Monge-Amp\`ere equation. Take the tautological $(n+1,0)$-form ${\boldsymbol{\zeta}}$ on $K_{X}$. Then
\begin{displaymath}
    \mathrm{vol}_{\mathcal{X}}=i^{(n+2)^2}d{\boldsymbol{\zeta}} \wedge \overline{d {\boldsymbol{\zeta}}}
\end{displaymath}
gives a volume form on $\mathcal{X}$.
\begin{proposition}[{\cite[Proposition 2.2]{Hirachi17}}]
There exists a defining function ${\boldsymbol{\rho}}\in \tilde{\mathcal{E}}(1)$ of $\mathcal{N}$ such that
\begin{displaymath}
    \left(dd^c {\boldsymbol{\rho}}\right)^{n+2}=-\frac{(n+1)!}{n+2}\left(1+{\boldsymbol{\mathcal{O}}}{\boldsymbol{\rho}}^{n+2}\right)\mathrm{vol}_{\mathcal{X}},
\end{displaymath}
where ${\boldsymbol{\mathcal{O}}}\in \tilde{\mathcal{E}}(-n-2)$. Moreover, such a ${\boldsymbol{\rho}}$ is unique modulo $O\left({\boldsymbol{\rho}}^{n+3}\right)$, and ${\boldsymbol{\mathcal{O}}}$ modulo $O\left({\boldsymbol{\rho}}\right)$ is independent of the choice of ${\boldsymbol{\rho}}$.
\end{proposition}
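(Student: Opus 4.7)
The plan is to construct $\boldsymbol{\rho}$ iteratively as a formal power series transverse to $\mathcal{N}$, exploiting the gauge freedom $\boldsymbol{\rho}\mapsto e^{\boldsymbol{f}}\boldsymbol{\rho}$ (with $\boldsymbol{f}\in\tilde{\mathcal{E}}(0)$) to kill the error in the Monge--Amp\`ere equation order by order until an unavoidable obstruction appears at order exactly $n+2$. To set up, I would fix any defining function $\boldsymbol{\rho}_0\in\tilde{\mathcal{E}}(1)$ of $\mathcal{N}$ and define the Monge--Amp\`ere functional
\begin{displaymath}
J[\boldsymbol{\rho}] := -\frac{n+2}{(n+1)!}\cdot\frac{(dd^c\boldsymbol{\rho})^{n+2}}{\mathrm{vol}_{\mathcal{X}}}\in\tilde{\mathcal{E}}(0),
\end{displaymath}
where a quick bidegree check, using that $\boldsymbol{\delta}_{\mu}^{\ast}\boldsymbol{\rho}=|\mu|^{2}\boldsymbol{\rho}$ and that $\mathrm{vol}_{\mathcal{X}}$ scales with bidegree $(n+2,n+2)$, shows the ratio is indeed of bidegree $(0,0)$. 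Strict pseudoconvexity of $\mathcal{N}$ forces $J[\boldsymbol{\rho}_0]>0$ near $\mathcal{N}$, so the task reduces to finding $\boldsymbol{\rho}$ with $J[\boldsymbol{\rho}]=1+\boldsymbol{\mathcal{O}}\boldsymbol{\rho}^{n+2}$ for some $\boldsymbol{\mathcal{O}}\in\tilde{\mathcal{E}}(-n-2)$.

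Next I would expand $dd^c(e^{\boldsymbol{f}}\boldsymbol{\rho})=e^{\boldsymbol{f}}\bigl(dd^c\boldsymbol{\rho}+d\boldsymbol{\rho}\wedge d^c\boldsymbol{f}+d\boldsymbol{f}\wedge d^c\boldsymbol{\rho}+\boldsymbol{\rho}\,dd^c\boldsymbol{f}+\boldsymbol{\rho}\,d\boldsymbol{f}\wedge d^c\boldsymbol{f}\bigr)$ and take the $(n+2)$-fold wedge to obtain a transformation law of the form $J[e^{\boldsymbol{f}}\boldsymbol{\rho}]=e^{(n+2)\boldsymbol{f}}\bigl(J[\boldsymbol{\rho}]+\boldsymbol{\rho}\,R[\boldsymbol{\rho};\boldsymbol{f}]\bigr)$, where $R$ is a nonlinear differential operator of order at most two in $\boldsymbol{f}$. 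Writing $\boldsymbol{f}=\sum_{j\geq 0}\boldsymbol{f}_{j}\boldsymbol{\rho}_{0}^{j}$ as a formal Taylor series with $\boldsymbol{f}_{j}$ independent of the transverse direction (and of bidegree $(-j,-j)$), the equation $J[e^{\boldsymbol{f}}\boldsymbol{\rho}_0]=1$ decouples into a hierarchy whose $j$-th equation has the schematic form
\begin{displaymath}
j(n+2-j)\,\boldsymbol{f}_{j}=\Phi_{j}(\boldsymbol{f}_{0},\ldots,\boldsymbol{f}_{j-1}),
\end{displaymath}
while the $j=0$ equation is just $e^{(n+2)\boldsymbol{f}_{0}}J[\boldsymbol{\rho}_{0}]|_{\mathcal{N}}=1$ and fixes $\boldsymbol{f}_{0}$. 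Because $j(n+2-j)\neq 0$ for $1\le j\le n+1$, each $\boldsymbol{f}_{j}$ is uniquely determined from the preceding data; this produces the desired $\boldsymbol{\rho}=e^{\boldsymbol{f}}\boldsymbol{\rho}_0$ with $J[\boldsymbol{\rho}]=1$ modulo $\boldsymbol{\rho}^{n+2}$, and the residual $(J[\boldsymbol{\rho}]-1)/\boldsymbol{\rho}^{n+2}$ defines $\boldsymbol{\mathcal{O}}\in\tilde{\mathcal{E}}(-n-2)$.

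For uniqueness, if $\boldsymbol{\rho}'=e^{\boldsymbol{f}}\boldsymbol{\rho}$ is another such solution, the same hierarchy runs with vanishing right-hand sides for $0\le j\le n+1$, forcing $\boldsymbol{f}_{j}=0$ throughout that range and hence $\boldsymbol{f}=O(\boldsymbol{\rho}^{n+2})$, i.e.\ $\boldsymbol{\rho}'=\boldsymbol{\rho}+O(\boldsymbol{\rho}^{n+3})$. Substituting this difference back into $J$ and reading off the $\boldsymbol{\rho}^{n+2}$-coefficient then shows that $\boldsymbol{\mathcal{O}}$ modulo $O(\boldsymbol{\rho})$ is independent of the choice. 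The crux of the whole argument is the combinatorial identity isolating the factor $j(n+2-j)$ in the linearized equation at each order: this indicial pattern, with roots precisely at $j=0$ and $j=n+2$, is exactly what makes the iteration succeed through order $n+1$ and stall at order $n+2$, producing the invariant obstruction. Verifying it is a careful but routine expansion of $(dd^c\boldsymbol{\rho})^{n+2}$ together with bidegree bookkeeping under the dilation $\boldsymbol{\delta}_{\mu}$, as worked out in Fefferman's original ambient construction and in Hirachi's cited paper.
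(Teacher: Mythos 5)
This proposition is quoted verbatim from Hirachi's paper (\cite[Proposition 2.2]{Hirachi17}); the present paper supplies no proof of it, so there is no in-paper argument to compare your sketch against. What you have written is a faithful outline of the classical Fefferman construction, which is in fact how the result is proved in the cited reference and in Fefferman's original work: normalize the Monge--Amp\`ere density $J[\boldsymbol{\rho}]$, pass to the conformal gauge $\boldsymbol{\rho}=e^{\boldsymbol{f}}\boldsymbol{\rho}_0$, expand $\boldsymbol{f}$ in powers of $\boldsymbol{\rho}_0$, and observe that the indicial coefficient of the resulting recursion vanishes exactly at order $n+2$, stalling the iteration and producing the invariant obstruction $\boldsymbol{\mathcal{O}}$. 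Your bidegree bookkeeping under $\boldsymbol{\delta}_\mu$ and the logic of the uniqueness and invariance claims are sound.

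Two steps that you assert without justification are exactly the nontrivial lemmas of that construction and should be flagged as such. First, the transformation law $J[e^{\boldsymbol{f}}\boldsymbol{\rho}]=e^{(n+2)\boldsymbol{f}}\bigl(J[\boldsymbol{\rho}]+\boldsymbol{\rho}\,R[\boldsymbol{\rho};\boldsymbol{f}]\bigr)$ is not visible from the naive $(n+2)$-fold wedge of your expansion of $dd^c(e^{\boldsymbol{f}}\boldsymbol{\rho})$: cross terms such as $(dd^c\boldsymbol{\rho})^{n+1}\wedge d\boldsymbol{\rho}\wedge d^c\boldsymbol{f}$ carry $d\boldsymbol{\rho}$ but no overt factor of $\boldsymbol{\rho}$. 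That the error is nevertheless $O(\boldsymbol{\rho})$ is Fefferman's key lemma $J(u\phi)=\phi^{n+2}J(u)+O(u)$, whose proof uses the bordered-Hessian structure (equivalently, the homogeneity $Z\boldsymbol{\rho}=\boldsymbol{\rho}$, $\overline{Z}\boldsymbol{\rho}=\boldsymbol{\rho}$ coming from $\boldsymbol{\rho}\in\tilde{\mathcal{E}}(1)$, which supplies the radial direction along which the row and column reductions are performed). Second, the indicial coefficient must actually be computed, not merely posited; the structurally essential fact, which you correctly identify, is that it vanishes at $j=n+2$ and is nonzero for $1\leqslant j\leqslant n+1$, and $j(n+2-j)$ indeed has exactly these roots, but this needs verification against the expansion rather than assertion.
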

We call such a ${\boldsymbol{\rho}}$ a {\textit{Fefferman defining function}} and Lorentz-K\"ahler metric ${\boldsymbol{g[\rho]}}$ with respect to ${\boldsymbol{\rho}}$ an {\textit{ambient metric}}. The function ${\boldsymbol{\mathcal{O}}}$ is called the {\textit{obstruction function}}.

Next, we recall CR invariant powers of the sub-Laplacian. Let ${\boldsymbol{\Delta}}$ be the Laplacian with respect to an ambient metric ${\boldsymbol{g[\rho]}}$. This operator maps $\tilde{\mathcal{E}}(w, w')$ to $\tilde{\mathcal{E}}(w-1, w'-1)$

\begin{lemma}[{\cite[Theorem 1.1]{GG05}}]\label{construction}
Let $(w,w')$ be such that $k=n+1+w+w'$ is a positive integer. Then for ${\boldsymbol{\tilde{f}}}\in \tilde{\mathcal{E}}(w, w')$,
\begin{displaymath}
    \left({\boldsymbol{\Delta^k \tilde{f}}}\right)\big|_{\mathcal{M}}\in \mathcal{E}(w-k, w'-k)
\end{displaymath}
depends only on ${\boldsymbol{{f}}}={\boldsymbol{\tilde{f}}}\big|_{\mathcal{M}}$ and defines a differential operator 
\begin{displaymath}
    {\boldsymbol{P}}_{w,w'}: \mathcal{E}(w, w') \to \mathcal{E}(w-k, w'-k)
\end{displaymath}
by ${\boldsymbol{P}}_{w,w'}=\frac{{\boldsymbol{\Delta^k}}}{2^k}$.
Moreover, the operator ${\boldsymbol{P}}_{w,w'}$ is independent of the choice of a Fefferman defining function if $k\leqslant n+1$.
\end{lemma}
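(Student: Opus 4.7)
The plan is to establish both assertions---that ${\boldsymbol{P}}_{w,w'}$ is a well-defined differential operator on $\mathcal{E}(w,w')$, and that it is independent of the Fefferman defining function when $k\leqslant n+1$---through a commutator identity between the ambient Laplacian $\boldsymbol{\Delta}$ and multiplication by $\boldsymbol{\rho}$, combined with a telescoping induction.

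First I would reduce well-definedness to a single vanishing statement. Any two homogeneous extensions $\tilde{\boldsymbol{f}}_{1},\tilde{\boldsymbol{f}}_{2}\in\tilde{\mathcal{E}}(w,w')$ of $\boldsymbol{f}\in\mathcal{E}(w,w')$ differ by $\boldsymbol{\rho}\phi$ with $\phi\in\tilde{\mathcal{E}}(w-1,w'-1)$, so it suffices to show
\[
\boldsymbol{\Delta}^{k}(\boldsymbol{\rho}\phi)\big|_{\mathcal{N}}=0\qquad\text{whenever } k=n+1+w+w'.
\]
Since $\boldsymbol{\rho}$ serves as a K\"ahler potential for the ambient metric and satisfies $Z\boldsymbol{\rho}=\overline{Z}\boldsymbol{\rho}=\boldsymbol{\rho}$, differentiating these homogeneity relations yields $\boldsymbol{\Delta}\boldsymbol{\rho}=2(n+2)$ and $\boldsymbol{g}^{\alpha\bar{\beta}}\partial_{\alpha}\boldsymbol{\rho}\,\partial_{\bar{\beta}}=\overline{Z}$, from which the pointwise identity
\[
\boldsymbol{\Delta}(\boldsymbol{\rho}\psi)=2(n+2+a+b)\psi+\boldsymbol{\rho}\,\boldsymbol{\Delta}\psi,\qquad \psi\in\tilde{\mathcal{E}}(a,b),
\]
follows by Leibniz.

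Iterating this identity I would prove by induction on $m$ the telescoping formula
\[
\boldsymbol{\Delta}^{m}(\boldsymbol{\rho}\phi)=2m(k-m)\,\boldsymbol{\Delta}^{m-1}\phi+\boldsymbol{\rho}\,\boldsymbol{\Delta}^{m}\phi,
\]
the coefficient arising from the sum $\sum_{j=0}^{m-1}2(k-1-2j)=2m(k-m)$. The point is that at $m=k$ the leading coefficient vanishes identically, so $\boldsymbol{\Delta}^{k}(\boldsymbol{\rho}\phi)=\boldsymbol{\rho}\,\boldsymbol{\Delta}^{k}\phi$ still carries a factor of $\boldsymbol{\rho}$ and therefore restricts to zero on $\mathcal{N}$. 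The factor $2^{-k}$ in ${\boldsymbol{P}}_{w,w'}=\boldsymbol{\Delta}^{k}/2^{k}$ is then chosen so that the principal symbol matches that of $(\Delta_{b}/2)^{k}$.

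Finally, for independence from the Fefferman defining function when $k\leqslant n+1$, the key input is that two such $\boldsymbol{\rho}$ and $\boldsymbol{\rho}'$ agree modulo $O(\boldsymbol{\rho}^{n+3})$, so their ambient metrics agree to sufficiently high order along $\mathcal{N}$ that the at most $2k$ derivatives in $\boldsymbol{\Delta}^{k}$ cannot expose the discrepancy before restriction to $\mathcal{N}$. The main obstacle I anticipate is precisely this last step: one must track carefully how each derivative in the iterated Laplacian interacts with the high-order vanishing of the metric perturbation, using the combination of homogeneity and the complex Monge--Amp\`ere normalisation. This is exactly where the sharp bound $k\leqslant n+1$ appears, because at $k=n+2$ the obstruction function $\boldsymbol{\mathcal{O}}\in\tilde{\mathcal{E}}(-n-2)$ first enters and genuinely obstructs the ambient construction.
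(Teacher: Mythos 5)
The paper does not prove this lemma at all --- it is quoted from Gover--Graham \cite{GG05} --- so there is no internal proof to compare against; your proposal has to stand on its own. Its first half does: the identities $\boldsymbol{g}^{I\bar J}\partial_{\bar J}\boldsymbol{\rho}=\zeta^{I}$ and $\boldsymbol{\Delta}\boldsymbol{\rho}=2(n+2)$ follow from the homogeneity of $\boldsymbol{\rho}$ and the definition $\boldsymbol{g}=dd^{c}\boldsymbol{\rho}$ alone, your Leibniz identity $\boldsymbol{\Delta}(\boldsymbol{\rho}\psi)=2(n+2+a+b)\psi+\boldsymbol{\rho}\boldsymbol{\Delta}\psi$ is exact, the induction giving the coefficient $2m(k-m)$ checks out, and its vanishing at $m=k$ yields $\boldsymbol{\Delta}^{k}(\boldsymbol{\rho}\phi)=\boldsymbol{\rho}\boldsymbol{\Delta}^{k}\phi$, hence well-definedness of $\boldsymbol{P}_{w,w'}$ (indeed for \emph{any} defining function in $\tilde{\mathcal{E}}(1)$, not only a Fefferman one). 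This is the standard ambient argument and is essentially complete, modulo the routine remark that dependence only on the $2k$-jet of $f$ along $\mathcal{N}$ makes the map a differential operator.

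The genuine gap is the second assertion, independence of the Fefferman defining function for $k\leqslant n+1$, which you flag but whose heuristic, as stated, cannot produce the claimed range. Counting ``at most $2k$ derivatives'' against a perturbation $\boldsymbol{\rho}'-\boldsymbol{\rho}=O(\boldsymbol{\rho}^{n+3})$ gives only $2k\leqslant n+2$, far short of $k\leqslant n+1$. The correct bookkeeping is: the two ambient metrics differ by $dd^{c}(\boldsymbol{\rho}^{n+3}\psi)=O(\boldsymbol{\rho}^{n+1})$, so $\boldsymbol{\Delta}'-\boldsymbol{\Delta}$ has coefficients vanishing to order $n+1$; writing $\boldsymbol{\Delta}'^{k}-\boldsymbol{\Delta}^{k}=\sum_{j=0}^{k-1}\boldsymbol{\Delta}'^{j}(\boldsymbol{\Delta}'-\boldsymbol{\Delta})\boldsymbol{\Delta}^{k-1-j}$, one must then show that each remaining Laplacian lowers the order of vanishing in $\boldsymbol{\rho}$ by at most \emph{one}, not two. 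This is exactly where the null-cone identity $|\boldsymbol{\nabla}\boldsymbol{\rho}|^{2}_{\boldsymbol{g}}=2\boldsymbol{\rho}$ (equivalent to $\boldsymbol{g}^{I\bar J}\partial_{\bar J}\boldsymbol{\rho}=\zeta^{I}$, which you already derived) enters: in $\boldsymbol{\Delta}(\boldsymbol{\rho}^{m}h)$ the a priori worst term $m(m-1)\boldsymbol{\rho}^{m-2}|\boldsymbol{\nabla}\boldsymbol{\rho}|^{2}h$ regains a power of $\boldsymbol{\rho}$, so $\boldsymbol{\Delta}(\boldsymbol{\rho}^{m}h)=\boldsymbol{\rho}^{m-1}(\cdots)+\boldsymbol{\rho}^{m}\boldsymbol{\Delta}h$. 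Consequently each term of the telescoped difference still vanishes to order at least $n+1-(k-1)=n+2-k$, which is $\geqslant 1$ precisely when $k\leqslant n+1$, and the restriction to $\mathcal{N}$ is unchanged. Your closing remark about the obstruction function entering at the critical power is correct as motivation for sharpness, but it is not a substitute for this counting, which is the actual content of the range $k\leqslant n+1$.
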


It is known \cite[Proposition 5.1]{GG05} that ${\boldsymbol{P}}_{w,w'}$ defined in Lemma \ref{construction} is formally self-adjoint. In particular, when $w=w'=\frac{k-1-n}{2}$, $1\leqslant k\leqslant n+1$, $P_{k}^{\theta}=P_{w,w}: C^{\infty}(N)\to C^{\infty}(N)$ satisfies
\begin{equation}
    P_{k}^{e^{2f}\theta}u=e^{-(n+1+k)f}P_k^{\theta}\left( e^{(n+1-k)f}u\right), \quad f\in C^{\infty}(N,\R).
\end{equation}
For $k=1$, the operator $P_1^{\theta}$ agrees with the conformal sub-Laplacian of Jerison-Lee; i.e.
\begin{equation}\label{subla}
    P_1^{\theta}=\Delta_b+\frac{n}{4(n+1)}R
\end{equation}
where $R$ is the associated scalar curvature. In general, if we let $T$ denote the Reeb vector field of $\theta$, then $P_k^{\theta}$ has the same principal part (in the Heisenberg sense) as 
\begin{equation}\label{prin}
    \left(\Delta_b+i(k-1)T\right)\left(\Delta_b+i(k-3)T\right)\cdots\left(\Delta_b-i(k-1)T\right).
\end{equation}
This implies that the operator $P_k^{\theta}$ is subelliptic of order $2k$ when $k<n+1$, see \cite[Section 2.2.4]{Ponge05} for more details. We denote the principal part of $P_k^{\theta}$ by $\tilde{P}_k^{\theta}$.

\subsection{Factorization of CR GJMS operators on $S^{2n+1}$}
This subsection deals with the factorization of CR GJMS operators on $S^{2n+1}$; we follow the argument in \cite{Takeuchi17}.

Let $S^{2n+1}\subset \C^{n+1}$ be the unit sphere centered at the origin with canonical CR structure, and $\theta_0$ be the contact form on $S^{2n+1}$ defined by
\begin{equation}
    \theta_0=\frac{i}{2}\sum_{i=1}^{n+1}\left(z^i d\bar{z}^i-\bar{z}^i dz^i\right)\big|_{S^{2n+1}}.
\end{equation}
Then the triple $(S^{2n+1}, T^{1,0}S^{2n+1}, \theta_0)$ is a Sasakian--Einstein manifold; i.e. the Tanaka--Webster Ricci curvature ${\mathrm{Ric}}_{\alpha \bar{\beta}}$ of $\theta_0$ satisfies
\begin{displaymath}
    {\mathrm{Ric}}_{\alpha \bar{\beta}}=(n+1)h_{\alpha\bar{\beta}}
\end{displaymath}
where $(h_{\alpha\bar{\beta}})$ is a positive Hermitian matrix defined in (\ref{metric}). Consider the Tanaka--Webster connection with respect to $\theta_0$. Note that the index $0$ is used for the component $T$ or $\theta_0$ in our index notation. The commutators of the derivatives for $f\in C^{\infty}(S^{2n+1})$ are given by
\begin{displaymath}
f_{[\alpha\beta]}=0, \quad 2f_{[\alpha\bar{\beta}]}=i h_{\alpha\bar{\beta}}f_0, \quad f_{[0\alpha]}=0,
\end{displaymath}
where $[\cdots]$ means the antisymmetrization over the enclosed indices. The Kohn Laplacian $\Box_b$ and sub-Laplacian $\Delta_b$ are given by
\begin{equation}\label{B}
\Box_b f=-f_{\bar{\alpha}}^{\;\;\bar{\alpha}}, \quad \Delta_bf=-f_{\bar{\alpha}}^{\;\;\bar{\alpha}}-f_{{\alpha}}^{\;\;{\alpha}}=\Box_b f+\overline{\Box}_b f,
\end{equation}
respectively. From the above commutation relations, we have
\begin{equation}\label{T}
\Box_b-\overline{\Box}_b=inT.
\end{equation}
The third covariant derivatives of $f$ satisfy
\begin{displaymath}
\begin{split}
f_{\alpha[0\beta]}&=0, \quad f_{\alpha[0\bar{\beta}]}=0,\\
2f_{\alpha[\beta\bar{\gamma}]}&=ih_{\beta\bar{\gamma}}f_{\alpha 0}+R_{\alpha\;\beta \bar{\gamma}}^{\;\;\delta\;\;\;}f_{\delta}.
\end{split}
\end{displaymath}
From these, it follows that the Kohn Laplacian and sub-Laplacian commute with the Reeb vector field $T$.

The Fefferman defining function ${\boldsymbol{\rho}}$ of the null space $\mathcal{N}\subset \C^{n+2}$ is given by ${\boldsymbol{\rho}}(\zeta)= \zeta_0\bar{\zeta}_0-\sum_{j=1}^{n+1}\zeta_j\bar{\zeta}_j$,
where $\zeta$ is the homogeneous coordinate system defined by
\begin{equation}\label{homo}
    \zeta_0=z_0, \quad \zeta_j=z_0z_j, \quad j=1, \cdots, n+1.
\end{equation}
The corresponding ambient metric ${\boldsymbol{{g}[\rho]}}$ is a flat Lorentz-K\"ahler metric defined on $\C^{n+2}\backslash\{0\}$ by
\begin{equation}
    {\boldsymbol{{g}[\rho]}}=i\left(-d\zeta_0\wedge d\bar{\zeta}_0+d\zeta_1\wedge d\bar{\zeta}_1+\cdots d\zeta_{n+1}\wedge d\bar{\zeta}_{n+1}\right).
\end{equation}
In homogeneous coordinates, the holomorphic Euler field $Z$ and anti-holomorphic Euler field $\overline{Z}$ are given by
\begin{equation}
Z=\sum_{i=0}^{n+1}\zeta_i\frac{\partial}{\partial \zeta_i}, \quad \overline{Z}=\sum_{i=0}^{n+1}\bar{\zeta}_i\frac{\partial}{\partial \bar{\zeta}_i}.
\end{equation}
Notice that for each function ${\boldsymbol{f}}\in \tilde{\mathcal{E}}(w, w')$, there exists a smooth function $f$ defined on $\C^{n+1}\backslash\{0\}$ such that ${\boldsymbol{f}}=\left(z_0\right)^w \left(\bar{z}_0\right)^{w'}f$. For convenience, we define the multiplication operator ${\boldsymbol{M}}_{v,v'}$ to raise the lower the bidegree.

\begin{definition}
On the Lorentz-K\"ahler manifold $(\C^{n+2}\backslash\{0\}, {\boldsymbol{{g}[\rho]}})$, the operator ${\boldsymbol{M}}_{v,v'}:\tilde{\mathcal{E}}(w, w')\to \tilde{\mathcal{E}}(w+v, w'+v')$ is defined by the multiplication by $\left(z_0\right)^v\left(\bar{z}_0\right)^{v'}$.
\end{definition}
These multiplication operators ${\boldsymbol{M}}_{v,v'}$ induce differential operators on $\S^{2n+1}$ corresponding to ${\boldsymbol{P}}_{w,w'}$ defined in Lemma \ref{construction}.
\begin{definition}\label{P}
Let $(w,w')\in \R^2$ such that $k=w+w'+n+1$ is a positive integer. The differential operator $P_{w,w'}$ on $C^{\infty}(S^{2n+1})$ is defined by
\begin{equation}
    P_{w,w'}={\boldsymbol{M}}_{k-w,k-w'}\circ {\boldsymbol{P}}_{w,w'} \circ {\boldsymbol{M}}_{w,w'}.
\end{equation}
\end{definition}
The main result in \cite{Takeuchi17} is the factorization of $P_{w,w'}$ on a Sasakian $\eta$-Einstein manifold. In this paper, we only need the corresponding result on $S^{2n+1}$.
\begin{theorem}[{\cite{Takeuchi17}}]\label{fact}
Let $(S^{2n+1}, T^{1,0}S^{2n+1}, \theta_0)$ be a Sasakian manifold of dimension $2n+1$ with Einstein constant $(n+1)$. Then $P_{w,w'}$ for $k=w+w'+n+1$ has the formula
\begin{equation}\label{GJMS}
    P_{w,w'}=\prod_{j=0}^{k-1}L_{w'-w+k-2j-1}.
\end{equation}
Here $L_{\mu}$ is the differential operator on $\S^{2n+1}$ defined by
\begin{equation}\label{L}
    L_{\mu}=\frac{1}{2}\Delta_b+\frac{i}{2}\mu T+\frac{1}{4}(n-\mu)(n+\mu),
\end{equation}
where $\Delta_b$ is the sub-Laplacian operator and $T$ is the Reeb vector field.
\end{theorem}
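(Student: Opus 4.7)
The plan is to follow the ambient-space approach of Takeuchi \cite{Takeuchi17}. Since $(S^{2n+1},\theta_0)$ is Sasakian--Einstein with Einstein constant $n+1$, the Fefferman defining function is $\boldsymbol{\rho}(\zeta)=|\zeta_0|^2-\sum_{j=1}^{n+1}|\zeta_j|^2$, and the associated ambient metric is the flat Lorentz--K\"ahler metric on $\C^{n+2}\setminus\{0\}$. In homogeneous coordinates the ambient Laplacian takes the clean constant-coefficient form
\[
\tfrac{1}{2}\boldsymbol{\Delta}=-\partial_{\zeta_0}\partial_{\bar\zeta_0}+\sum_{j=1}^{n+1}\partial_{\zeta_j}\partial_{\bar\zeta_j},
\]
so the whole problem reduces to tracking how this operator acts on bi-homogeneous functions after restriction to $\mathcal{N}=\{\boldsymbol{\rho}=0\}$ and conjugation by the multiplication operators $\boldsymbol{M}_{v,v'}$ from Definition \ref{P}.

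The technical heart is the single-step identity
\[
\tfrac{1}{2}\boldsymbol{\Delta}\bigl(\zeta_0^{a}\bar\zeta_0^{b}\,\tilde f\bigr)\Big|_{\mathcal{N}}=\zeta_0^{a-1}\bar\zeta_0^{b-1}\,L_{\mu(a,b)}(\tilde f)\big|_{S^{2n+1}},
\]
valid for $\tilde f\in\tilde{\mathcal{E}}(0,0)$, with $\mu(a,b)$ linear in $(a,b)$. To prove it, I would introduce fiber coordinates $z_j=\zeta_j/\zeta_0$, rewrite each of the ambient second derivatives $\partial_{\zeta_0}\partial_{\bar\zeta_0}$, $\partial_{\zeta_j}\partial_{\bar\zeta_0}$, $\partial_{\zeta_0}\partial_{\bar\zeta_j}$, $\partial_{\zeta_j}\partial_{\bar\zeta_k}$ in $(\zeta_0,z_j)$-variables, and collapse the pure $\zeta_0$-derivatives using the Euler field identities $Z\boldsymbol{\tilde f}=a\boldsymbol{\tilde f}$, $\bar Z\boldsymbol{\tilde f}=b\boldsymbol{\tilde f}$. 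The Sasakian--Einstein identities recalled in Subsection 2.3---in particular $\Delta_b=\Box_b+\overline{\Box}_b$ and $\Box_b-\overline{\Box}_b=inT$---together with the constraint $\boldsymbol{\rho}=0$ then let me repackage the result as a combination of $\Delta_b$, $iT$, and a zeroth-order constant $c(a,b)$; matching $c(a,b)=\tfrac{1}{4}(n-\mu)(n+\mu)$ pins down $\mu(a,b)$.

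With the one-step identity in hand, the factorization follows by iteration. Since $\boldsymbol{\Delta}:\tilde{\mathcal{E}}(w-j,w'-j)\to\tilde{\mathcal{E}}(w-j-1,w'-j-1)$, applying $(\boldsymbol{\Delta}/2)^k$ to $\boldsymbol{M}_{w,w'}\tilde f=\zeta_0^{w}\bar\zeta_0^{w'}\tilde f$ and using the identity at each stage produces
\[
\zeta_0^{w-k}\bar\zeta_0^{w'-k}\,L_{\mu_{k-1}}L_{\mu_{k-2}}\cdots L_{\mu_0}(\tilde f)\big|_{S^{2n+1}},\qquad \mu_j=\mu(w-j,w'-j).
\]
Applying $\boldsymbol{M}_{k-w,k-w'}$ on the left strips the $\zeta_0$ factors and yields $P_{w,w'}=L_{\mu_{k-1}}\cdots L_{\mu_0}$ on $C^{\infty}(S^{2n+1})$. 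A direct check using $k=w+w'+n+1$ identifies the arithmetic progression $\{\mu_j\}$ with $\{w'-w+k-2j-1:j=0,\ldots,k-1\}$, giving the claimed product.

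The principal obstacle is the single-step identity: the coefficient of $\Delta_b$ and $iT$ must be extracted carefully from the coordinate change $z_j=\zeta_j/\zeta_0$, and it is precisely the Sasakian--Einstein constant $n+1$ that forces the zeroth-order coefficient to collapse to the symmetric expression $\tfrac{1}{4}(n^2-\mu^2)$ without additional Ricci corrections. On a non-Einstein Sasakian manifold this step would fail, which is why the factorization in the form \eqref{GJMS} is special to the sphere (and, more generally, to the Sasakian $\eta$-Einstein case treated by Takeuchi).
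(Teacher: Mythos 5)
The paper does not prove Theorem \ref{fact} at all --- it is quoted from \cite{Takeuchi17} --- so your proposal has to be judged against the strategy of that cited work, which it correctly identifies in outline (flat ambient Laplacian, one-step reduction to $L_\mu$, iteration). However, as written there is a genuine gap at the iteration step. Your one-step identity is only asserted \emph{after restriction to} $\mathcal{N}$, and a single copy of $\boldsymbol{\Delta}$ does not descend to a well-defined operator $\mathcal{E}(a,b)\to\mathcal{E}(a-1,b-1)$: if $\boldsymbol{g}\in\tilde{\mathcal{E}}(a-1,b-1)$, then with your normalization of the flat ambient Laplacian one computes
\begin{equation*}
\tfrac12\boldsymbol{\Delta}\bigl(\boldsymbol{\rho}\,\boldsymbol{g}\bigr)=\boldsymbol{\rho}\,\tfrac12\boldsymbol{\Delta}\boldsymbol{g}-(a+b+n)\,\boldsymbol{g},
\end{equation*}
so changing the extension of a bidegree-$(a,b)$ function by $\boldsymbol{\rho}\,\boldsymbol{g}$ changes $\tfrac12\boldsymbol{\Delta}(\cdot)\big|_{\mathcal{N}}$ by $-(a+b+n)\boldsymbol{g}|_{\mathcal{N}}\neq 0$ in general. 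This is exactly why Lemma \ref{construction} is stated only for the full power $\boldsymbol{\Delta}^k$ with $k=n+1+w+w'$. Consequently, ``using the identity at each stage'' is not justified: after the first application you only know the result on $\mathcal{N}$, while the next application of $\boldsymbol{\Delta}$ sees transverse derivatives, and for the natural pullback extension the one-step identity holds only modulo $\boldsymbol{\rho}$ (the zeroth-order constant in $L_\mu$ appears only after setting $|z|=1$), so the discarded $O(\boldsymbol{\rho})$ terms feed back into the restriction at later steps via the commutator above.

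To close the gap you must either (i) fix one extension and carry the $\boldsymbol{\rho}$-filtration through all $k$ steps, showing that $\boldsymbol{\Delta}$ acts in upper-triangular form with the operators $L_{w'-w+k-2j-1}$ on the diagonal, so that only the diagonal survives restriction --- this bookkeeping, tied to the Sasakian--Einstein structure, is the actual content of Takeuchi's proof --- or (ii) on the sphere, bypass the issue spectrally: extend each bigraded spherical harmonic in $\mathcal{H}_{j,j'}$ by its harmonic bihomogeneous polynomial, for which the one-step identity is exact on all of $\C^{n+2}\setminus\{0\}$ and iteration is legitimate; both $P_{w,w'}$ and $\prod_j L_{w'-w+k-2j-1}$ then act as scalars on each $\mathcal{H}_{j,j'}$, the scalars agree, and density of spherical harmonics gives \eqref{GJMS}. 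Your computation of the arithmetic progression $\mu_j$ and the role of the Einstein constant are fine once one of these repairs is in place.
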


\section{Improved CR Sobolev inequalities}
In this section, we improve the CR Sobolev inequalities on $S^{2n+1}$ under the vanishing of higher order moments of the volume elements. We start from the concentration compactness principle on CR manifolds $(N,\theta)$. The original version in \cite{Lions84,Lions85} is on $\R^n$, but it can be carried out on $(N,\theta)$ because, as mentioned in Remark I.5 in \cite{Lions84}, the crucial ingredient in Lemma I.2 there is valid in an arbitrary measure space.

\begin{lemma}\label{ccp}
Let $N$ be a compact strictly pseudoconvex CR manifold of dimension $2n+1$. We assume that $F_i$ is a sequence of complex-valued functions bounded in $S^{k,2}(N)$, $k\in\N$ with $k<n+1$ and $p=\frac{2Q}{Q-2k}$, such that $F_i\rightharpoonup F$ weakly in $S^{k,2}(N)$. Moreover, we assume as measures, 
\begin{equation}
|F_i|^p d\zeta \to |F|^pd\zeta+\nu, \quad \bar{F_i}\tilde{P}^{\theta}_k F_i d\zeta \to \bar{F}\tilde{P}^{\theta}_k F d\zeta+\sigma,
\end{equation}
where $d\zeta=\theta\wedge d\theta^n$. Then we can find countably many points $\{x_i\}\subset N$ such that
\begin{displaymath}
    \nu=\sum_{i}\nu_i \delta_{x_i}, \quad \nu_i^{\frac{2}{p}}\leqslant C_{n,2k} \sigma_i
\end{displaymath}
where $\nu_i=\nu(x_i)$ and $\sigma_i=\sigma(x_i)$.
\end{lemma}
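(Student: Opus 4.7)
The plan is to adapt the classical Lions concentration--compactness argument \cite{Lions84,Lions85} to the CR setting, using the sharp inequality \eqref{optimal} on $\H^n$ as the local model. As a preliminary reduction, I would set $G_i = F_i - F$, so that $G_i \rightharpoonup 0$ weakly in $S^{k,2}(N)$ and, by the Rellich-type compact embedding for Folland--Stein spaces, strongly in $S^{k-1,2}(N)$. A Brezis--Lieb type decomposition for both $|\cdot|^p d\zeta$ and $\bar{\cdot}\,\tilde{P}^{\theta}_k(\cdot)\,d\zeta$ shows that the defect measures are unchanged, so it suffices to treat $F = 0$.

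The heart of the argument is a localized sharp Sobolev inequality. For $x_0 \in N$ and a cutoff $\phi \in C^\infty(N)$ supported in a small CR normal coordinate chart centered at $x_0$, I would pull $\phi F_i$ back to $\H^n$ (extending by zero) and apply \eqref{optimal}. Because the pulled-back contact form differs from $\theta_c$ by terms that are $O(r^2)$ in the Heisenberg scaling, and because $\tilde{P}^{\theta}_k$ is by definition the principal part of $P^{\theta}_k$ and matches $P_k^{\theta_c}$ at the center up to Heisenberg order $\leqslant 2k-1$, one obtains, for any $\delta > 0$ and sufficiently small support,
\begin{equation*}
\left(\int_N |\phi F_i|^p \, d\zeta\right)^{2/p} \leqslant (C_{n,2k} + \delta)\int_N \overline{\phi F_i}\,\tilde{P}^{\theta}_k(\phi F_i)\, d\zeta + E_i,
\end{equation*}
where $E_i$ is bounded by $\|F_i\|_{S^{k-1,2}}$ times constants depending on $\phi$. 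The commutator $[\tilde{P}^{\theta}_k,\phi]$ is a subelliptic operator of Heisenberg order at most $2k-1$, so
\begin{equation*}
\int_N \overline{\phi F_i}\,\tilde{P}^{\theta}_k(\phi F_i)\,d\zeta = \int_N |\phi|^2\, \overline{F_i}\,\tilde{P}^{\theta}_k F_i\,d\zeta + R_i
\end{equation*}
with $R_i \to 0$ by Rellich applied in the $S^{k-1,2}$ topology. Passing $i \to \infty$ in the two displays gives
\begin{equation*}
\left(\int_N |\phi|^p \, d\nu \right)^{2/p} \leqslant (C_{n,2k} + \delta) \int_N |\phi|^2 \, d\sigma.
\end{equation*}

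Since $\delta$ can be made arbitrarily small by shrinking the support of $\phi$ around any candidate concentration point, the reverse H\"older argument of Lions (Lemma I.2 in \cite{Lions84}, which, as noted in Remark I.5 there, holds on any measure space) applies and produces the atomic decomposition $\nu = \sum_i \nu_i \delta_{x_i}$ with $\nu_i^{2/p} \leqslant C_{n,2k}\,\sigma_i$. The main obstacle is controlling the error in the localized Sobolev step: one has to verify carefully that swapping the pulled-back contact form for $\theta_c$ and $P_k^{\theta_c}$ for $\tilde{P}^{\theta}_k$ only produces errors of two types, namely a multiplicative distortion that tends to $1$ as the chart shrinks, and genuine lower-order terms that are killed by the compact embedding $S^{k,2}(N) \hookrightarrow S^{k-1,2}(N)$. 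For $k=1$ this reduces to the perturbations of the Jerison--Lee conformal sub-Laplacian already handled in \cite{JL87}; for $k \geqslant 2$, the higher-order Heisenberg calculus requires some bookkeeping to ensure no term scales at the critical order.
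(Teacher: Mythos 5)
Your overall architecture (Brezis--Lieb reduction to $F=0$, a reverse H\"older inequality, Lions' lemma on a general measure space) matches the paper's first half, but the step you call the heart of the argument --- a \emph{localized} sharp Sobolev inequality on a small chart with constant $C_{n,2k}+\delta$ and an error $E_i$ controlled by $\|F_i\|_{S^{k-1,2}}$ --- is precisely the ingredient that is not available in the CR setting, and it is not what the paper does. In pseudohermitian normal coordinates the pulled-back operator differs from the Heisenberg model not only by terms with small coefficients of the same Heisenberg order, but by terms involving derivatives in the Reeb direction (e.g.\ for $k=1$ the terms $O^2\mathfrak{C}(\partial_z\partial_t)$ and $O^3\mathfrak{C}(\partial_t^2)$ visible in (\ref{expansion}) before rescaling). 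A quadratic-form comparison on a \emph{fixed} chart therefore produces pairings involving $T$-derivatives of Heisenberg order up to $2k$, which are controlled neither by $\|F_i\|_{S^{k-1,2}}$ nor by $\delta\int \overline{\phi F_i}\,\tilde{P}^{\theta}_k(\phi F_i)\,d\zeta+C\|F_i\|_2^2$ uniformly over $S^{k,2}$-bounded functions; the coefficients are small only in the Heisenberg-weight sense, which helps when the function concentrates at a small scale but not for an arbitrary function supported in the chart. This is exactly the obstruction, recalled in the introduction, that prevented Jerison--Lee from proving an Aubin-type almost-sharp inequality on general CR manifolds and forced their blow-up argument; indeed, in this paper the global almost-sharp inequality (Theorem \ref{almost2}) is \emph{deduced from} Lemma \ref{ccp}, so assuming a localized version of it to prove the lemma is essentially circular relative to the paper and leaves the hardest analytic step unproven. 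The closing remark in your proposal that the higher-order case ``requires some bookkeeping'' understates this: for a fixed chart there is no concentration scale to exploit, and the bookkeeping cannot be closed in the form you state.

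The paper's route avoids this entirely: it first obtains the atomic structure of $\nu$ from a \emph{non-sharp} reverse H\"older inequality coming from the Folland--Stein embedding $S^{k,2}(N)\hookrightarrow L^p(N)$ together with Lions' lemma, and only then recovers the sharp constant at each atom by a blow-up. Around each atom one finds points where $\bar{G}_i\tilde{P}^{\theta}_k G_i$ blows up, rescales by Heisenberg dilations in pseudohermitian normal coordinates (Cayley transform on the sphere) with the scale normalized so the energy density at the center equals $1$, uses local subelliptic estimates to extract a nontrivial limit $G$ on $\H^n$, and applies the exact sharp inequality (\ref{optimal}) to $G$; transferring the limits of the localized integrals back gives $\nu(x_j)^{2/p}\leqslant C_{n,2k}\,\sigma(x_j)$. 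If you want to salvage your approach, you would have to either prove the localized almost-sharp inequality directly (a genuinely open-looking difficulty for the reasons above) or replace that step by this rescaling argument, which is where the sharp constant actually enters.
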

\begin{proof}
Let $G_i:=F_i-F$. Then, up to a subsequence, as $i\to \infty$, $G_i\rightharpoonup 0$ weakly in $S^{k,2}(N)$ and $G_i\to 0$ in $L^2(N)$. Also, $\nu_i:=|F_i|^p d\zeta- |F|^pd\zeta \to \nu$ and $\sigma_i:=\bar{F_i}\tilde{P}^{\theta}_k F_i d\zeta - \bar{F}\tilde{P}^{\theta}_k F d\zeta\to \sigma$ as measures. For any measurable set $A\subset N$, by \cite[Lemma 2.6]{Lieb83} and the weak convergence in $S^{k,2}(N)$, respectively, we have 
\begin{equation}\label{mea}
\begin{split}
    \lim_{i\to \infty} \int_A |G_i|^p d\zeta&=\lim_{i\to \infty}\int_A|F_i|^p d\zeta- \int_A|F|^pd\zeta=\nu(A),\\
    \lim_{i\to \infty} \int_A \bar{G_i}\tilde{P}^{\theta}_k G_i d\zeta &= \lim_{i\to \infty} \int_A \bar{F_i}\tilde{P}^{\theta}_k F_i d\zeta - \int_A\bar{F}\tilde{P}^{\theta}_k F d\zeta=\sigma(A),
    \end{split}
\end{equation}
which implies that $|G_i|^p d\zeta \to \nu$ and $\bar{G_i}\tilde{P}^{\theta}_k G_i d\zeta\to \sigma$ as measures.

Firstly, we claim that $\sigma$ is a real measure. From \cite[Theorem 5.1]{GG05}, we know that the operator $P^{\theta}_k$ is self-adjoint. We have for any real function $\phi\in C^{\infty}(N)$,
\begin{equation}\label{self1}
\int_N\phi\bar{G_i}P^{\theta}_kG_i d\zeta=\int_N G_i P^{\theta}_k(\phi \bar{G_i}) d\zeta\\=\int_N \phi G_i P^{\theta}_k\bar{G_i} d\zeta+l.o.t,
\end{equation}
Moreover,
\begin{equation}\label{self2}
\begin{split}
    &\int_N\phi\bar{G_i}P^{\theta}_kG_i d\zeta= \int_N \phi \bar{G_i}\tilde{P}^{\theta}_k G_id\zeta+l.o.t,\\
    &\int_N \phi G_i P^{\theta}_k\bar{G_i} d\zeta=\int_N \phi G_i \tilde{P}^{\theta}_k\bar{G_i} d\zeta+l.o.t,
    \end{split}
\end{equation}
where all lower order terms in (\ref{self1}) and (\ref{self2}) involve lower order derivatives of $G_i$.

By the compact embedding $S^{k,2}(N)\hookrightarrow S^{k',2}(N)$ for $k'<k$ in \cite{FS74} and the weak convergence, all lower order terms in (\ref{self1}) and (\ref{self2}) will vanish as $i\to \infty$. Therefore, since $\bar{G_i}\tilde{P}^{\theta}_k G_i\theta\wedge d\theta^n\to \sigma$, combining (\ref{self1}) and (\ref{self2}), we have
\begin{equation}
    \int_N \phi d\overline{\sigma}=\int_N \phi d\sigma,
\end{equation}
which implies that $\sigma$ is a real measure.

Similarly, by the compact embedding $S^{k,2}(N)\hookrightarrow S^{k',2}(N)$ for $k'<k$ in \cite{FS74} and (\ref{mea}), for any real function $\varphi\in C^{\infty}(N)$, we have
\begin{equation}\label{convergence}
    \begin{split}
    & \lim_{i\to \infty}\int_N |G_i\varphi|^p \theta\wedge d\theta^n=\lim_{i\to \infty}\int_N|\varphi|^p d\nu_i=\int_N |\varphi|^p d\nu,\\
    & \lim_{i\to \infty}\int_N (\bar{G}_i\varphi) P^{\theta}_k(G_i\varphi) \theta\wedge d\theta^n=\lim_{i\to \infty}\int_N \varphi^2 \bar{G}_i\tilde{P}^{\theta}_k G_i \theta\wedge d\theta^n=\int_N \varphi^2 d\sigma.
    \end{split}
\end{equation}
Therefore, by the embedding $S^{k,2}(N)\hookrightarrow L^{p}(N)$ in \cite{FS74}, we obtain a reversed H\"older inequality; i.e. there exists a constant $C$ such that for any real function $\varphi\in C^{\infty}(N)$, 
\begin{displaymath}
    \left(\int_N |\varphi|^p d\nu\right)^{\frac{1}{p}}\leqslant C\left(\int_N \varphi^2 d\sigma\right)^{\frac{1}{2}}.
\end{displaymath}
Following the same argument in \cite[Lemma I.2]{Lions85}, we know that $\nu$ is supported on countably many points $\{x_i\}\subset N$. 

We claim that for each atom $x_j$, there does not exist a neighborhood of $x_j$ in which $\bar{G}_i\tilde{P}^{\theta}_k G_i$ is uniformly bounded. If this is not true, we assume that in a neighborhood $U$ of $x_j$, $\bar{G}_i\tilde{P}^{\theta}_k G_i$ has a uniform upper bound $C$. By Lebesgue's dominated convergence theorem, for any ball $B_R\subset U$ containing $x_j$, we have
\begin{equation}\label{unif}
    \sigma(B_R)=\lim_{i\to \infty}\int_{B_R} \bar{G}_i\tilde{P}^{\theta}_k G_i d\zeta=\int_{B_R} \lim_{i\to \infty} \bar{G}_i\tilde{P}^{\theta}_k G_i d\zeta\leqslant C\int_{B_R}d\zeta.
\end{equation}
By (\ref{unif}), we obtain that $\sigma(x_j)=0$ as $R\to 0$, which implies that $\sigma(x_j)$ and $\nu(x_j)$ are zero, a contradiction.

Therefore, there exists a sequence of points $\{\xi^{j}_i\}$ on $N$ such that as $i\to \infty$, 
\begin{equation}\label{blowup}
\xi^{j}_i\to x_j, \quad \left(\bar{G}_i\tilde{P}^{\theta}_k G_i\right)(\xi^j_i)\to \infty.
\end{equation} 
If $(N,\theta)$ is the unit sphere $S^{2n+1}$ with the canonical contact form $\theta_0$, without loss of generality, we can assume that the south pole $(0,\cdots, -1)$ is not an atom. Then, $(\H^n,\theta_c)$ and $(S^{2n+1},\theta_0)$ are related by the Cayley transform $\mathcal{C}:\H^n\to S^{2n+1}\backslash (0,\cdots, -1)$. Composed with a translation if necessary, $\mathcal{C}$ is a diffeomorphism of a neighborhood $\Omega_{\xi^j_i}$ of $\xi^j_i$ onto a neighborhood $U$ of the origin in $\H^n$. Moreover, it is known \cite{BFM07} that the contact form and CR GJMS operators on $\H^n$ and $S^{2n+1}$ are related by
\begin{align*}
    \mathcal{C}_{*} \theta_0&=\frac{2}{(1+|z|^2)^2+t^2}\theta_c,\\
    P^{\theta_c}_k\left(\left(2|J_{\mathcal{C}}|\right)^{\frac{Q-2k}{2Q}}\left(F\circ \mathcal{C}\right)\right)&=\left(2|J_{\mathcal{C}}|\right)^{\frac{Q+2k}{2Q}}\left(P^{\theta_0}F\right)\circ \mathcal{C},
\end{align*}
where $|J_{\mathcal{C}}|$ is the determinant of the Jacobian of $\mathcal{C}$ and $F\in C^{\infty}(S^{2n+1})$.

On general CR manifolds $(N,\theta)$, at each point $\xi^j_i$, we introduce the pseudohermitian normal coordinates \cite[Theorem 4.3]{JL87}. $\Theta_{\xi^j_i}$ is a diffeomorphism of a neighborhood $\Omega_{\xi^j_i}$ of $\xi^j_i$ onto a neighborhood $U$ of the origin in $\H^n$.

 We will use $\mathcal{C}$ or $\Theta_{\xi^j_i}$ to identify $U$ with the neighborhood $\Omega_{\xi^j_i}$ of ${\xi^j_i}$. Therefore, under the dilation on $\H^n$, $T^{\delta}(z,t)=(\delta^{-1}z, \delta^{-2}t)$, for $\delta$ sufficiently small, we have
\begin{equation}\label{expansion}
\begin{split}
        &(T^{\delta}\circ \Theta_{\xi^j_i})_{*}\theta=\delta^2(1+\delta O^1)\theta_c,\\
        &(T^{\delta}\circ \Theta_{\xi^j_i})_{*}(\theta\wedge d\theta^n)=\delta^{2n+2}(1+\delta O^1)\theta_c \wedge d\theta_c^n,\\
        &(T^{\delta}\circ \Theta_{\xi^j_i})_{*}\Delta_b=\delta^{-2}(P^{\theta_c}_2+\mathfrak{C}(\partial_z)+\delta O^1 \mathfrak{C}(\partial_t,\partial^2_z)+\delta^2 O^2\mathfrak{C}(\partial_z \partial_t)+\delta^3O^3\mathfrak{C}(\partial^2_t)),
        \end{split}
\end{equation}
where $P^{\theta_c}_2$ is the sub-Laplacian on $\H^n$ and $O^r\mathfrak{C}$ indicates an operator involving linear combinations of the indicated derivatives with coefficients in $O^r$. For convenience, we consider general CR manifolds because all arguments in the following hold on $(S^{2n+1},\theta_0)$ by replacing $\Theta_{\xi^j_i}$ by $\mathcal{C}$.

Now consider the change of coordinates $(\tilde{z},\tilde{t})=T^{\delta}(z,t)=(\delta^{-1}z,\delta^{-2}z)$ on $\H^n$, and set
\begin{displaymath}
    \tilde{\theta}_c=d\tilde{t}+i\sum^{n}_j\left(\tilde{z}^jd\bar{\tilde{z}}^j-\bar{\tilde{z}}^jd\tilde{z}^j\right)=\delta^{-2}T^{\delta}_{*}\theta_c, \quad T^{\delta}_{*}=\left(\left(T^{\delta}\right)^{-1}\right)^{*}.
\end{displaymath}
On the set $\delta^{-1}U$ with coordinates $(\tilde{z},\tilde{t})$ define $\tilde{G}_i(\tilde{z},\tilde{t})=\delta^{\frac{Q}{p}}G_i(\delta\tilde{z},\delta^2\tilde{t})$ with $\delta=\delta_i>0$ chosen so that $\left(\bar{\tilde{G}}_i\tilde{P}^{\tilde{\theta}_c}_k \tilde{G}_i\right)(0)=1$. Observe that $\theta=\theta_c$ at $0$, from (\ref{expansion}), we have
\begin{displaymath}
    \left(\bar{\tilde{G}}_i\tilde{P}^{\tilde{\theta}_c}_k \tilde{G}_i\right)(0)=\delta_i^{Q}\left(\bar{G}_i\tilde{P}^{\theta}_k G_i\right)(\xi^j_i).
\end{displaymath}
In particular, $\delta_i\to 0$ as $i\to \infty$, and hence $\delta_i^{-1}U$ tends to the full space $\H^n$ as $i\to \infty$. We define $\theta_i=\delta_i^{-2}(T^{\delta_i}\circ \Theta_{\xi^j_i})_{*}\theta$ in coordinates $(\tilde{z},\tilde{t})$ on the region $\delta_i^{-1}U$. 

If we denote by $\left(W^i_1, \cdots, W^i_n \right)$ the pseudohermitian frame used to define $\Theta_{\xi^j_i}$, we may assume that $\left(W^i_1, \cdots, W^i_n\right)$ converges in $C^r$ for all $r$ to a frame $\left(W_1, \cdots, W_n\right)$. Now set $Z^i_l=\delta_i (T^{\delta_i}\circ \Theta_{\xi^j_i})_{*} W^i_l$, so that $\left(Z^i_1, \cdots, Z^i_n\right)$ is a pseudohermitian frame for $\theta_i$.  By examining the error terms in the expression for $W^i_l$ \cite[Remark 4.4]{JL87}, it is easy to show that, for any $R>0$, $Z^i_l$ converges in $C^r(B_R)$ to $Z_l$ for every $r$. Similarly, $\theta_i$ and $P^{\theta_i}_k$ converge uniformly in $C^r(B_R)$ to $\theta_c$ and $P^{\theta_c}_k$, respectively.

Now fix a radius $R>0$, note that
\begin{equation}\label{trans}
    \int_{B_R(0)}|\tilde{G}_i(\tilde{z},\tilde{t})|^p \theta_i\wedge d\theta_i^n=\int_{B_{\delta_i R}(\xi^j_i)}|G_i(z,t)|^p \theta\wedge d\theta^n.
\end{equation}
The volume element $\theta\wedge d\theta^n$ is equal to $C\left(1+\delta_i O^1\right)\theta_c\wedge d\theta_c^n$ on $B_{2\delta_iR}$ by (\ref{expansion}). Therefore, $\tilde{G}_i\in L^p(B_{2R},\theta_i)$ with uniform bounds on the norm. In particular, $\tilde{G}_i\in L^k(B_{2R},\theta_i)$ uniformly as $i\to \infty$. Combined with the uniform bound on $\bar{\tilde{G}}_i\tilde{P}^{\theta_i}_k \tilde{G}_i$, this gives $\tilde{G}_i\in S^{k,r}(B_{2R}, \theta_i)$ for every $r<\infty$ with uniform bounds on the norm by \cite[Section 19]{FS74}. Thus by \cite[Proposition 5.15]{JL87}, $\tilde{G}_i$ is uniformly bounded in $C^r(B_R)$ for every $r$.

Now we can take a subsequence $i_l\to \infty$ for which $\tilde{G}_{i_l}$ converges in $C^1(B_R)$. Define a function $G$ on all of $\H^n$ by first choosing a subsequence $\tilde{G}_{i_l}$ converging in $C^1(B_1)$, and then a subsequence converging in $C^1(B_2)$, etc. 

Notice that $G\in C^1(\H^n)$ and $G\neq 0$ because $\left(\bar{G}\tilde{P}^{\theta_c}_kG\right)(0)=1$. Since $\theta_i\wedge d\theta_i^n$ approaches $\theta_c\wedge d\theta_c$ uniformly on compact sets, we verify
\begin{displaymath}
    \int_{\H^n} \bar{G}P^{\theta_c}_kG \theta_c\wedge d\theta_c\leqslant C<\infty.
\end{displaymath}
In fact, 
\begin{equation}\label{trans2}
\begin{split}
    \int_{B_R} \bar{G}P^{\theta_c}_kG \theta_c\wedge d\theta_c&=\lim_{i\to \infty}\int_{B_R}\bar{\tilde{G}}_i\tilde{P}^{\theta_i}_k \tilde{G}_i \theta_i\wedge d\theta_i^n\\
    &=\lim_{i\to \infty}\int_{B_{\delta_i R}} \delta_i^{Q}\left(\bar{G}_i\tilde{P}^{\theta}_k G_i\right) (\delta_i^{-2}\theta)\wedge \left(\delta_i^{-2}d\theta\right)^n+ l.o.t\\
    &\leqslant \overline{\lim_{i\to \infty}} \int_{N} \bar{G}_i\tilde{P}^{\theta}_k G_i \theta\wedge d\theta^n<\infty
    \end{split}
\end{equation}
because all lower order terms will vanish as $\delta_i\to 0$ by (\ref{expansion}). Since $\tilde{G}_i$ converges to $G$ pointwisely on $\H^n$ and $G\in S^{k,2}(\H^n)$, by Lebesgue's dominated convergence theorem, for fixed radius $R$ such that $B_R\subset \Omega_{\xi^j_i}$ for all $i$, we have
\begin{equation}\label{e1}
\begin{split}
    & \lim_{i\to \infty}\int_{\delta_i^{-1}B_R}|\tilde{G}_i|^p \theta_i\wedge d\theta_i^n=\int_{\H^n}|G|^p \theta_c\wedge d\theta_c,\\
    & \lim_{i\to \infty}\int_{\delta_i^{-1}B_R}\bar{\tilde{G}}_i\tilde{P}^{\theta_i}_k \tilde{G}_i \theta_i\wedge d\theta_i^n= \int_{\H^n} \bar{G}P^{\theta_c}_kG \theta_c\wedge d\theta_c.
    \end{split}
\end{equation}
Moreover, by (\ref{optimal}), 
\begin{equation}\label{e2}
    \left(\int_{\H^n}|G|^p \theta_c\wedge d\theta_c\right)^{\frac{2}{p}}\leqslant C_{n,2k}\int_{\H^n} \bar{G}P^{\theta_c}_kG \theta_c\wedge d\theta_c.
\end{equation}
Besides, by similar arguments in (\ref{trans}) and (\ref{trans2}), we obtain
\begin{equation}\label{e3}
    \begin{split}
        & \lim_{i\to \infty}\int_{\delta_i^{-1}B_R}|\tilde{G}_i|^p \theta_i\wedge d\theta_i^n=\lim_{i\to \infty}\int_{B_R}|G_i|^p \theta\wedge d\theta^n=\nu (B_R),\\
        & \lim_{i\to \infty}\int_{\delta_i^{-1}B_R} \bar{G}P^{\theta_c}_kG \theta_c\wedge d\theta_c=\lim_{i\to \infty}\int_{B_{R}} \bar{G}_i\tilde{P}^{\theta}_k G_i \theta\wedge d\theta^n=\sigma (B_R).
    \end{split}
\end{equation}
Combining (\ref{e1}), (\ref{e2}) and (\ref{e3}), as $R\to 0$ in (\ref{e3}), we obtain
\begin{equation}\label{e4}
    (\nu (x_j))^{\frac{2}{p}}\leqslant C_{n,2k}\sigma (x_j).
\end{equation}
Applying the above argument at each point $x_j\in N$, we complete the proof.
\end{proof}

Now we can prove Theorem \ref{improve} with the help of Lemma \ref{ccp}. 
\begin{proof}[Proof of Theorem \ref{improve}]
Let 
\begin{displaymath}
    \alpha=\frac{{C}_{n,2k}}{\Theta({w,w'};\frac{Q-2k}{Q},2n+1)}+\epsilon.
\end{displaymath}
If (\ref{asi}) is not true, then for any $j\in \N$, we can find a $F_j\in S^{k,2}(S^{2n+1})$ such that 
\begin{equation}\label{constraint}
    \int_{S^{2n+1}}g|F|^p d\xi=0
\end{equation}
for all $g\in \overline{\PP}_{w,w'}$, and 
\begin{equation}
    \left(\int_{S^{2n+1}}|F_j|^p d\xi\right)^{\frac{2}{p}}> \alpha \int_{S^{2n+1}}\bar{F}_j P_k^{\theta_0} F_j d\xi+j\int_{S^{2n+1}}|F_j|^2 d\xi.
\end{equation}
We may assume
\begin{displaymath}
    \left(\int_{S^{2n+1}}|F_j|^p d\xi\right)^{\frac{2}{p}}=1.
\end{displaymath}
Then
\begin{equation}\label{small}
    \int_{S^{2n+1}}\bar{F}_j P_k^{\theta_0} F_j d\xi<\frac{1}{\alpha}, \quad \int_{S^{2n+1}}|F_j|^2 d\xi<\frac{1}{j}.
\end{equation}
It follows that $F_j\rightharpoonup 0$ weakly in $S^{k,2}(S^{2n+1})$. After passing to a subsequence we have
\begin{equation}
    \bar{F}_j P_k^{\theta_0} F_j d\xi\to \sigma, \quad |F_j|^p d\xi \to \nu.
\end{equation}
By Lemma \ref{ccp} we can find countably many points $\{x_i\}\in S^{2n+1}$ such that
\begin{equation}\label{bubble}
    \nu=\sum_{i}\nu_i \delta_{x_i}, \quad \nu_i^{\frac{2}{p}}\leqslant {C}_{n,2k} \sigma_i,
\end{equation}
where $\nu_i=\nu(x_i)$, $\sigma_i=\sigma(x_i)$. Then
\begin{equation}\label{volume}
    \nu(S^{2n+1})=1,\quad \sigma(S^{2n+1})<\frac{1}{\alpha}.
\end{equation}
It follows from (\ref{constraint}) and (\ref{small}) that $\int_{S^{2n+1}}gd\nu=0$ for all $g\in \overline{\PP}_{w,w'}$, hence $\nu\in \mathcal{M}^c_{w,w'}(S^{2n+1})$. By definition of $\Theta(w,w';\theta,2n+1)$, (\ref{bubble}) and (\ref{volume}) we have
\begin{displaymath}
    \Theta({w,w'};\frac{Q-2k}{Q},2n+1)\leqslant \sum_i \nu_i^{\frac{2}{p}}\leqslant \sum_i {C}_{n,2k} \sigma_i={C}_{n,2k} \sigma(S^{2n+1})\leqslant \frac{{C}_{n,2k}}{\alpha}.
\end{displaymath}
Hence
\begin{displaymath}
    \alpha\leqslant \frac{{C}_{n,2k}}{\Theta({w,w'};\frac{Q-2k}{Q},2n+1)}.
\end{displaymath}
This contradicts the choice of $\alpha$.
\end{proof}
We now compute $\Theta (1,1; \theta, 2n+1)$ following the idea in \cite{HW21}.
\begin{proposition}\label{value}
For $\theta\in (0,1)$ and $n\in \N$,
\begin{equation}
    \Theta (1,1; \theta, 2n+1)=(n+2)^{1-\theta}.
\end{equation}
Moreover $\Theta (1,1; \theta, 2n+1)$ is achieved at $v\in \mathcal{M}^c_{1,1}(S^{2n+1})$ if and only if
\begin{equation}\label{achi}
    \nu=\frac{1}{n+2}\sum_{i=1}^{n+2}\delta_{x_i}
\end{equation}
for $\{x_i\}\in S^{2n+1}$ being the vertices of a regular $(n+1)$-simplex embedded in the unit ball.
\end{proposition}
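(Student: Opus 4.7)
The plan is to convert the moment constraints defining $\mathcal{M}^c_{1,1}(S^{2n+1})$ into a tight-frame identity in $\C^{n+2}$, derive a uniform pointwise upper bound on every atom weight, and read off the lower bound from a one-line concavity estimate; the equality analysis will simultaneously pin down the optimal configurations.

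First, I would unpack the condition $\int g\,d\nu=0$ for all $g\in\overline{\PP}_{1,1}$ into the two families of moment identities $\sum_k\nu_k x_{k,i}=0$ and $\sum_k\nu_k x_{k,i}\bar{x}_{k,j}=\delta_{ij}/(n+1)$, valid for $\nu=\sum_k\nu_k\delta_{x_k}$ with $x_k\in S^{2n+1}\subset\C^{n+1}$, after normalizing constants against their sphere averages. Then I would introduce the auxiliary unit vectors
\[
u_k:=\frac{1}{\sqrt{n+2}}\bigl(1,\sqrt{n+1}\,x_k\bigr)\in\C^{n+2},
\]
and verify by a single block-matrix computation that the two moment families together collapse to the single isotropic frame relation $\sum_k\nu_k\,u_ku_k^{*}=\frac{1}{n+2}\,I_{n+2}$.

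Testing this identity against $u_j$ on both sides and using $|u_k^{*}u_j|\le 1$ yields the pointwise upper bound $\nu_j\leq (n+2)^{-1}$ for every atom. Since $t\mapsto t^{\theta-1}$ is decreasing on $(0,\infty)$ for $\theta\in(0,1)$, this forces $\nu_k^{\theta-1}\ge (n+2)^{1-\theta}$, and summing $\nu_k\cdot\nu_k^{\theta-1}$ gives
\[
\sum_k\nu_k^{\theta}\;\ge\; (n+2)^{1-\theta}\sum_k\nu_k\;=\;(n+2)^{1-\theta}.
\]
This establishes $\Theta(1,1;\theta,2n+1)\ge (n+2)^{1-\theta}$.

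The two inequalities above are each saturated in an unambiguous way: the concavity step is tight iff $\nu_k=(n+2)^{-1}$ at every atom (forcing exactly $n+2$ atoms), and the Cauchy--Schwarz step is tight iff $\{u_k\}$ is an orthonormal basis of $\C^{n+2}$. Unpacking the orthogonality via the definition of $u_k$ translates to $\langle x_j,x_k\rangle=-1/(n+1)$ for $j\neq k$ together with $|x_k|=1$, which is precisely the Gram matrix of the vertices of a regular $(n+1)$-simplex inscribed in the unit ball, yielding \eqref{achi}. For the matching upper bound I would directly check that these simplex vertices with uniform weights lie in $\mathcal{M}^c_{1,1}(S^{2n+1})$: the vanishing first moment from $\bigl|\sum_k x_k\bigr|^2=(n+2)-(n+2)(n+1)\cdot\tfrac{1}{n+1}=0$, and the isotropy from $\bigl(\sum_k x_kx_k^{*}\bigr)x_j=\tfrac{n+2}{n+1}x_j$ for every $j$. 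I do not foresee a real obstacle: the only genuinely creative step is spotting the rescaling that fuses the two moment constraints into one isotropic frame identity in dimension $n+2$, after which the sharp inequality and its equality case are essentially forced.
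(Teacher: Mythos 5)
Your proposal is correct and is essentially the paper's argument in dual form: your tight-frame identity $\sum_k\nu_k u_ku_k^{*}=\frac{1}{n+2}I_{n+2}$ is exactly the row-orthonormality statement behind the paper's orthonormal system $v_0,\dots,v_{n+1}$ in $\ell^2$ (Lemma \ref{ortho}), your pointwise bound $\nu_j\leqslant\frac{1}{n+2}$ is the paper's Bessel/Parseval step, and the concavity summation and equality analysis (uniform weights, $n+2$ atoms, orthonormal $u_k$, hence Hermitian Gram entries $-\tfrac{1}{n+1}$ and the regular $(n+1)$-simplex) coincide with the paper's. No gaps beyond the same mild imprecision the paper itself has in the ``if'' direction, which your explicit check of the moment conditions actually addresses.
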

Before the proof, we make some observations about $\mathcal{M}^c_{1,1}(S^{2n+1})$. As complex vector spaces, 
\begin{equation}\label{base}
    \begin{split}
        \PP_{1,0}&=\mathrm{span} \left\{ z_1, \cdots, z_{n+1},\right\},\quad \PP_{0,1}=\mathrm{span} \left\{ \bar{z}_1, \cdots, \bar{z}_{n+1},\right\},\\
        \PP_{1,1}&=\mathrm{span} \left\{z_1\bar{z}_1,\cdots, z_n\bar{z}_n, z_i\bar{z}_j, \quad 1\leqslant i,j\leqslant n+1, i\neq j\right\}.
    \end{split}
\end{equation}
Let $\nu$ be a probability measure supported on countably many points $\{x_i\}_{i=1}^{K}\subset S^{2n+1}$. We denote $\nu_i=\nu(x_i)$, and take $\nu(x_i)=0$ for $i>K$ if $K$ is finite. Define $n+2$ vectors in $\ell^2$ as
\begin{align}\label{vectors}
\begin{split}
    &v_0=\begin{pmatrix}
    \sqrt{\nu_1}, \sqrt{\nu_2}, \cdots
    \end{pmatrix},\\
    &v_j=\begin{pmatrix}
    \sqrt{(n+1)\nu_1}x_{1,j}, \sqrt{(n+1)\nu_2}x_{2,j}, \cdots
    \end{pmatrix},\quad 1\leqslant j\leqslant n+1.\\
    \end{split}
\end{align}
Here $x_{i,j}$ is the $j$th coordinate of $x_i$ as a vector in $\C^{n+1}$ and
\begin{displaymath}
    \ell^2=\left\{(c_1,c_2,\cdots):c_i\in \C, \sum_{i=1}^{\infty}c_i\bar{c}_i<\infty\right\}
\end{displaymath}
is equipped with the stand inner product.
\begin{lemma}\label{ortho}
Let $\nu$ be a probability measure supported on countably many points $\{x_i\}_{i=1}^{K}\subset S^{2n+1}$. Then
\begin{center}
    $\nu\in\mathcal{M}^c_{1,1}(S^{2n+1})\Rightarrow v_0,\cdots, v_{n+1}$ is orthonormal in $\ell^2$.
\end{center}
Here $v_0,\cdots, v_{n+1}$ is defined in (\ref{vectors}) and $\ell^2$ is equipped with standard inner product.
\end{lemma}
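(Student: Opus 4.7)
The plan is to verify orthonormality by computing each entry of the Gram matrix of $(v_0, v_1, \ldots, v_{n+1})$ directly, exploiting the moment constraints imposed on $\nu$ by its membership in $\mathcal{M}^c_{1,1}(S^{2n+1})$. That membership means $\int_{S^{2n+1}} g \, d\nu = 0$ for every $g \in \overline{\PP}_{1,1}$, so the task reduces to selecting enough mean-zero elements of $\widetilde{\PP}_{1,1}$ to read off each Gram entry.

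First I would record the useful mean-zero elements of $\widetilde{\PP}_{1,1}$ via the basis \eqref{base}. By rotational symmetry of $d\xi$, each linear monomial $z_j$ and $\bar{z}_j$ lies in $\overline{\PP}_{1,1}$, each off-diagonal product $z_i \bar{z}_j$ with $i \neq j$ lies in $\overline{\PP}_{1,1}$, and each difference $|z_j|^2 - |z_k|^2$ lies in $\overline{\PP}_{1,1}$ (since $\int_{S^{2n+1}} |z_j|^2 d\xi$ is independent of $j$). Integrating against $\nu$ then yields, respectively, $\sum_i \nu_i x_{i,j} = 0$; $\sum_i \nu_i x_{i,j} \bar{x}_{i,k} = 0$ for $j \neq k$; and a common value $s := \sum_i \nu_i |x_{i,j}|^2$ independent of $j$.

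Next I would assemble the Gram matrix. The normalization $\langle v_0, v_0 \rangle = \sum_i \nu_i = 1$ is immediate since $\nu$ is a probability measure. The vanishing of first moments gives $\langle v_0, v_j \rangle = 0$ for $1 \leq j \leq n+1$, and the vanishing of the mixed second moments gives $\langle v_j, v_k \rangle = 0$ for $1 \leq j < k \leq n+1$. For the diagonal entries $\langle v_j, v_j \rangle = (n+1)s$ I would evaluate $s$ by summing over $j$ and using the sphere identity $\sum_{j=1}^{n+1} |x_{i,j}|^2 = 1$:
\begin{equation*}
(n+1)s = \sum_{j=1}^{n+1} \sum_i \nu_i |x_{i,j}|^2 = \sum_i \nu_i \sum_{j=1}^{n+1} |x_{i,j}|^2 = \sum_i \nu_i = 1,
\end{equation*}
so $s = 1/(n+1)$ and hence $\langle v_j, v_j \rangle = 1$.

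No step here presents a genuine obstacle; the lemma is essentially a translation of the moment-vanishing hypothesis into orthonormality of a finite family of vectors in $\ell^2$. The only point that takes a moment's thought is recognising $|z_j|^2 - |z_k|^2$ as a member of $\overline{\PP}_{1,1}$, which follows from the $U(n+1)$-invariance of $d\xi$; once that is in hand the rest is straightforward bookkeeping.
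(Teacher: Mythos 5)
Your proof is correct and follows essentially the same route as the paper: translate membership in $\mathcal{M}^c_{1,1}(S^{2n+1})$ into the vanishing of the first moments and mixed second moments of $\nu$ and the equality of the diagonal second moments, which is exactly the Gram-matrix computation the paper summarizes in (\ref{orthor}). The only cosmetic difference is that you normalize the diagonal entries via $|z_j|^2-|z_k|^2\in\overline{\PP}_{1,1}$ together with $\sum_j|x_{i,j}|^2=1$, whereas the paper uses $z_j\bar{z}_j-\tfrac{1}{n+1}$ directly; both yield $\langle v_j,v_j\rangle=1$.
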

\begin{proof}
Notice that $\widetilde{\PP}_{1,1}=\PP_{1,0}\cup \PP_{0,1}\cup \PP_{1,1}$. By the definition of $\mathcal{M}^c_{1,1}(S^{2n+1})$ and (\ref{base}), $\nu$ satisfies
\begin{align}\label{orthor}
    \int_{S^{2n+1}} \left(z_i\bar{z}_i-\frac{1}{n+1}\right)d\nu=0, \quad \int_{S^{2n+1}} z_i\bar{z}_jd\nu=0, \quad 1\leqslant i,j\leqslant n+1, i\neq j.
\end{align}
The orthonormality of $\{v_j\}_{j=0}^{n+1}$ follows from (\ref{orthor}).
\end{proof}

\begin{proof}[Proof of Proposition \ref{value}]
If $\nu\in\mathcal{M}^c_{1,1}(S^{2n+1})$ is a probability measure supported on countably many points $\{x_i\}_{i=1}^{K}\subset S^{2n+1}$, then by Lemma \ref{ortho} we know that $v_0,\cdots, v_{n+1}$ is orthonormal in $\ell^2$. Let $e_1, e_2, \cdots$ be the standard base of $\ell^2$. Then for $1\leqslant i\leqslant K$, 
\begin{equation}
    \sum_{j=0}^{n+1}\langle e_i, v_j\rangle ^2=(n+2)\nu_i.
\end{equation}
Here $\langle \cdot, \cdot\rangle$ is the standard inner product on $\ell^2$. It follows from Parseval's relation that 
\begin{equation}
    0<\sum_{j=0}^{n+1}\langle e_i, v_j\rangle ^2\leqslant 1.
\end{equation}
Hence using $0<\theta<1$, we have
\begin{equation}
    \begin{split}
        \sum_{i=1}^K\nu_i^{\theta}&=\frac{1}{(n+2)^{\theta}}\sum_{i=1}^K\left(\sum_{j=0}^{n+1}\langle e_i, v_j\rangle ^2\right)^{\theta}\\
        & \geqslant \frac{1}{(n+2)^{\theta}}\sum_{i=1}^K\sum_{j=0}^{n+1}\langle e_i, v_j\rangle ^2\\
        & =\frac{1}{(n+2)^{\theta}}\sum_{j=0}^{n+1}\sum_{i=1}^K\langle e_i, v_j\rangle ^2\\
        & =\frac{1}{(n+2)^{\theta}}\sum_{j=0}^{n+1}1\\
        & =(n+2)^{1-\theta}.
    \end{split}
\end{equation}
If the equality holds, then for $1\leqslant i\leqslant K$,
\begin{displaymath}
    \sum_{j=0}^{n+1}\langle e_i, v_j\rangle ^2= 1.
\end{displaymath}
Therefore, $\nu_i=\frac{1}{n+2}$ and $K=n+2$. Moreover, it implies the matrix
\begin{displaymath}
    A=\begin{bmatrix}
    v_0, v_1, \cdots, v_{n+1}
    \end{bmatrix}
\end{displaymath}
is unitary. We know for $1\leqslant i<j\leqslant n+2$, $\left\|x_i-x_j\right\|=\sqrt{\frac{2(n+2)}{n+1}}$. Hence, $\nu=\frac{1}{n+2}\sum_{i=1}^{n+2}\delta_{x_i}$
for $\{x_i\}\in S^{2n+1}$ being the vertices of a regular $(n+1)$-simplex embedded in the unit ball.

Finally, we see $\Theta (1,1; \theta, 2n+1)\geqslant (n+2)^{1-\theta}$, and it can be achieved by a probability measure $\nu$ in the form of (\ref{achi}). It follows that $\Theta (1,1; \theta, 2n+1)=(n+2)^{1-\theta}.$
\end{proof}

\section{Sharp CR Sobolev inequalities}
The main result in this section is the classification of optimizers of the sharp Sobolev inequalities (\ref{sharpSo}). First, by improved Sobolev inequalities in (\ref{balanced}), we prove the existence of an optimizer.
\begin{proposition}
Denote $Q=2n+2$, $p=\frac{2Q}{Q-2k}$. Then the sharp constant in (\ref{sharpSo}) is attained. Moreover, for any minimizing sequence $\{F_i\}$ there is a subsequence $\{F_{i_m}\}$ and a sequence $\{\Phi_{i_m}\}$ in the CR automorphism group $\mathcal{A}(S^{2n+1})$ of $S^{2n+1}$ such that
\begin{equation}\label{replaced}
    F^{\Phi}_{i_m}=|J_{\Phi_{i_m}}|^{\frac{1}{p}}\Phi^{*}_{i_m}F_{i_m}
\end{equation}
converges strongly in $S^{k,2}(S^{2n+1})$, where $|J_{\Phi_{i_m}}|$ is the determinant of the Jacobian of $\Phi_{i_m}$.
\end{proposition}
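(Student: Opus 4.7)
The plan is to combine a conformal rearrangement (the Lieb trick) with the improved Sobolev inequality of Corollary \ref{impro} and a Brezis--Lieb splitting. By the CR covariance of $P_k^{\theta_0}$ and the conformal invariance of the $L^p$ norm under $F\mapsto |J_\Phi|^{1/p}\Phi^{*}F$ for $\Phi\in\mathcal{A}(S^{2n+1})$, the Sobolev quotient
$$\mathcal{J}(F)=\frac{\int_{S^{2n+1}}\bar{F}P_k^{\theta_0}F\,d\xi}{\left(\int_{S^{2n+1}}|F|^p d\xi\right)^{2/p}}$$
is $\mathcal{A}(S^{2n+1})$-invariant, so the conformal image of a minimizing sequence is still minimizing. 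The coset of $\mathcal{A}(S^{2n+1})=\mathrm{PSU}(n+1,1)$ modulo the isotropy of a fixed point is parametrized by the open unit ball in $\C^{n+1}$; picking a continuous family $\{\Phi_w\}_{|w|<1}$ of representatives, define $\Psi_{F_i}(w):=\int_{S^{2n+1}}\Phi_w(\zeta)\,|F_i(\zeta)|^p\,d\xi(\zeta)\in\C^{n+1}$. A degree-theoretic argument in the spirit of [Lieb83, Lemma 2.6] (the extension of $\Psi_{F_i}$ to the closed ball has boundary map of nonzero degree) produces some $w_i$ with $\Psi_{F_i}(w_i)=0$. The rearranged sequence $F_i^{\Phi}:=|J_{\Phi_{w_i}}|^{1/p}\Phi_{w_i}^{*}F_i$ is therefore a balanced minimizing sequence with $\|F_i^{\Phi}\|_p=1$, $\int_{S^{2n+1}}\xi_j|F_i^{\Phi}|^p d\xi=0$ for $j=1,\ldots,2n+2$, and $\int \bar{F}_i^{\Phi}P_k^{\theta_0}F_i^{\Phi}d\xi\to C_{n,2k}^{-1}$.

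Next, by weak compactness in $S^{k,2}(S^{2n+1})$ and the compact embedding $S^{k,2}\hookrightarrow L^2$, I would pass to a subsequence with $F_i^{\Phi}\rightharpoonup F$ in $S^{k,2}$ and $F_i^{\Phi}\to F$ in $L^2$. Applying Corollary \ref{impro} to $F_i^{\Phi}$ and taking $i\to\infty$ gives
$$1 \leq \left(\frac{C_{n,2k}}{2^{k/(n+1)}}+\epsilon\right)C_{n,2k}^{-1}+C(\epsilon)\|F\|_2^2;$$
if $F\equiv 0$, this forces $1\leq 2^{-k/(n+1)}+\epsilon\,C_{n,2k}^{-1}$, which fails for small $\epsilon$ since $2^{k/(n+1)}>1$. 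Hence $F\not\equiv 0$.

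Finally, write $F_i^{\Phi}=F+G_i$ with $G_i\rightharpoonup 0$ in $S^{k,2}$. The Brezis--Lieb lemma gives $\|F_i^{\Phi}\|_p^p=\|F\|_p^p+\|G_i\|_p^p+o(1)$, while weak convergence together with self-adjointness of $P_k^{\theta_0}$ splits the energy as $\int \bar{F}_i^{\Phi}P_k^{\theta_0}F_i^{\Phi}d\xi=\int \bar{F}P_k^{\theta_0}F\,d\xi+\int\bar{G}_iP_k^{\theta_0}G_id\xi+o(1)$. Setting $A:=\|F\|_p^p>0$ and $B:=\lim\|G_i\|_p^p$, we have $A+B=1$; applying the sharp inequality (\ref{sharpSo}) separately to $F$ and to $G_i$, summing, and comparing to the total energy $C_{n,2k}^{-1}$ yields $A^{2/p}+B^{2/p}\leq 1$. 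Since $2/p<1$ and $A+B=1$ force $A^{2/p}+B^{2/p}\geq A+B=1$, with equality only when one summand vanishes, we must have $B=0$, so $G_i\to 0$ in $L^p$; equality further forces $F$ to saturate (\ref{sharpSo}) and $\int\bar{G}_iP_k^{\theta_0}G_i\to 0$, yielding the claimed strong convergence $F_i^{\Phi}\to F$ in $S^{k,2}(S^{2n+1})$. The main obstacle is the balancing step: making the degree-theoretic Lieb rearrangement rigorous on $S^{2n+1}$ via the noncompact group $\mathrm{PSU}(n+1,1)$ and verifying the requisite continuity up to the boundary of the ball. Once balancedness is in hand, Corollary \ref{impro} rules out the concentration alternative in the concentration--compactness dichotomy, and the Brezis--Lieb splitting makes strong convergence automatic.
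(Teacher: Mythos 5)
Your proposal is correct and follows essentially the same route as the paper: balance the minimizing sequence by CR automorphisms, use Corollary \ref{impro} to rule out a vanishing weak limit, and conclude strong convergence via the Lieb/Brezis--Lieb splitting together with strict subadditivity of $t\mapsto t^{2/p}$. The balancing step you flag as the main obstacle is exactly Lemma B.1 of \cite{FL10} (proved there by the topological argument you sketch), which the paper simply cites --- note only that the degree-theoretic balancing is not \cite[Lemma 2.6]{Lieb83}; that lemma is the missing-mass splitting you correctly invoke later.
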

\begin{proof}
By Lemma B.1 in \cite{FL10}, for each $F_i$, there exists an element $\Phi_{i}$ of $\mathcal{A}(S^{2n+1})$ such that $F^{\Phi}_{i}$ defined in (\ref{replaced}) satisfies the balanced conditions (\ref{balanced}). By CR invariance, we may replace $F_i$ by $F^{\Phi}_{i}$ in (\ref{sharpSo}). Thus, $\{F^{\Phi}_{i}\}$ is also a minimizing sequence. Passing to a subsequence $\{F_{i_m}\}$, we may assume that $F^{\Phi}_{i_m}\rightharpoonup F$ weakly in $S^{k,2}(S^{2n+1})$ and that $F^{\Phi}_{i_m} \to F$ strongly in $L^2(S^{2n+1})$. 

Without loss of generality, we may assume $\int_{S^{2n+1}}|F^{\Phi}_{i_m}|^p d\xi=1$ for all $i_m$. By Corollary \ref{impro}, we can choose a sufficiently small $\epsilon$ such that as $i_m\to \infty$,
\begin{equation}\label{nonzero}
    \begin{split}
        1=\int_{S^{2n+1}}|F^{\Phi}_{i_m}|^p d\xi&\leqslant \left(\frac{{C}_{n,2k}}{2^{\frac{k}{n+1}}}+\epsilon\right)\int_{S^{2n+1}}\bar{F}^{\Phi}_{i_m} P_k^{\theta_0} F^{\Phi}_{i_m} d\xi +C(\epsilon)\int_{S^{2n+1}}|F^{\Phi}_{i_m}|^2 d\xi,\\
        &\leqslant \left({C}_{n,2k}-\epsilon\right)\int_{S^{2n+1}}\bar{F}^{\Phi}_{i_m} P_k^{\theta_0} F^{\Phi}_{i_m} d\xi+C(\epsilon)\int_{S^{2n+1}}|F^{\Phi}_{i_m}|^2 d\xi,\\
        &\to \left(1-\frac{\epsilon}{{C}_{n,2k}}\right)+C(\epsilon)\int_{S^{2n+1}}|F|^2 d\xi.
    \end{split}
\end{equation}
which implies $F\neq 0$.

By \cite[Lemma2.6]{Lieb83}, we have
\begin{equation}\label{dep}
        1=\|F^{\Phi}_{i_m}\|_p^p=\|F\|_p^p+\|F^{\Phi}_{i_m}-F\|_p^p+o(1),
\end{equation}
Since for $a,b,c\geqslant 0$ and $p> 2$, $\left(a^p+b^p+c^p\right)^{\frac{2}{p}}\leqslant a^2+b^2+c^2$. We have 
\begin{equation}\label{str}
    \begin{split}
        \left(\int_{S^{2n+1}}|F^{\Phi}_{i_m}|^p d\xi\right)^{\frac{2}{p}}&- \left(\int_{S^{2n+1}}|F|^p d\xi\right)^{\frac{2}{p}}\\
        &\leqslant \left(\int_{S^{2n+1}}|F^{\Phi}_{i_m}-F|^p d\xi\right)^{\frac{2}{p}}+o(1)\\
        &\leqslant {C}_{n,2k}\int_{S^{2n+1}}\left(\bar{F}^{\Phi}_{i_m}-\bar{F}\right) P_k^{\theta_0} \left(F^{\Phi}_{i_m}-F\right) d\xi +o(1)\\
        &\leqslant {C}_{n,2k}\left(\int_{S^{2n+1}}\bar{F}^{\Phi}_{i_m} P_k^{\theta_0} F^{\Phi}_{i_m} d\xi- \int_{S^{2n+1}}\bar{F} P_k^{\theta_0} F d\xi\right)+o(1).
    \end{split}
\end{equation}
Since $\{F^{\Phi}_{i_m}\}$ is a minimizing sequence, as $i_m\to \infty$, we conclude that
\begin{equation}
    1-\left(\int_{S^{2n+1}}|F|^p d\xi\right)^{\frac{2}{p}}\leqslant 1-{C}_{n,2k}\int_{S^{2n+1}}\bar{F} P_k^{\theta_0} F d\xi.
\end{equation}
This implies that $F$ is a minimizer because $F\neq 0$.

In order to see that the convergence of $\{F^{\Phi}_{i_m}\}$ in $L^p(S^{2n+1})$ is strong, we need to show that $\|F\|_p^p=1$. By the weak convergence and (\ref{dep}), we may assume that $\|F\|_p^p=a\in (0,1]$ and $\lim \|F^{\Phi}_{i_m}-F\|_p^p=1-a$. The fact that $F$ is a minimizer implies equalities in (\ref{str}) in limit; i.e.
\begin{displaymath}
    1-a^{\frac{2}{p}}=(1-a)^{\frac{2}{p}}=1-a^{\frac{2}{p}}.
\end{displaymath}
This gives the conclusion because $1<a^{\frac{2}{p}}+(1-a)^{\frac{2}{p}}$ for $a\in (0,1)$.
\end{proof}
\begin{corollary}\label{transl}
Let $(S^{2n+1}, T^{1,0}S^{2n+1}, \theta_0)$ be the CR unit sphere with the volume element $d\xi$. Suppose that $u$ is a positive local minimizer of \begin{displaymath}
Y_k(S^{2n+1})=\inf \left\{A^{\theta_0}_k(F);F\in S^{k,2}(S^{2n+1}), B^{\theta_0}_k(F)=1 \right\}, \quad 1\leqslant k<n+1,
\end{displaymath}
where $A^{\theta_0}_k(F)$ and $B^{\theta_0}_k(F)$ are defined by 
\begin{align*}
    A^{\theta_0}_k(F)& :=\int_{S^{2n+1}} \bar{F} P_k^{\theta_0}(F) d\xi, \quad F\in S^{k,2}(S^{2n+1}),\\
    B^{\theta_0}_k(F)& :=\int_{S^{2n+1}} |F|^p d\xi, \quad p=\frac{2Q}{Q-2k}.
\end{align*}
Then there is an element $\Phi$ of the CR automorphism group ${\mathcal{A}}(S^{2n+1})$ of $S^{2n+1}$ such that
\begin{equation}\label{replace}
    u^{\Phi}=|J_{\Phi}|^{\frac{1}{p}}\Phi^{*}u
\end{equation}
is a positive minimizer of $Y_k(S^{2n+1})$ which satisfies (\ref{balanced}), where $|J_{\Phi}|$ is the determinant of the Jacobian of $\Phi$.
\end{corollary}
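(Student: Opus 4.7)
The plan is to reduce the statement to the already-invoked Lemma B.1 in \cite{FL10} together with the CR covariance of the functionals $A^{\theta_0}_k$ and $B^{\theta_0}_k$; the statement is in fact a single-function version of what has just been used in the proof of the preceding proposition for minimizing sequences.

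First, I would verify CR invariance of both functionals under the change $u\mapsto u^\Phi=|J_\Phi|^{1/p}\Phi^* u$. For $\Phi\in\mathcal{A}(S^{2n+1})$ we have $\Phi^*\theta_0=e^{2f}\theta_0$ for a suitable $f\in C^\infty(S^{2n+1};\R)$, and $|J_\Phi|=e^{(n+1)f}$, so that $|J_\Phi|^{1/p}=e^{\frac{Q-2k}{2}\cdot\frac{f}{n+1}\cdot\frac{n+1}{1}}=e^{(n+1-k)f}$ after identifying $p=\tfrac{2Q}{Q-2k}$. Changing variables, the $L^p$ norm transforms as
\begin{equation*}
B^{\theta_0}_k(u^\Phi)=\int_{S^{2n+1}}|u^\Phi|^p\,d\xi=\int_{S^{2n+1}}|u|^p\,d\xi=B^{\theta_0}_k(u),
\end{equation*}
since the extra Jacobian factor from $\Phi^*(d\xi)$ cancels exactly $|J_\Phi|$. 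For the quadratic form, I would invoke the conformal covariance
\begin{equation*}
P_k^{e^{2f}\theta_0}u=e^{-(n+1+k)f}P_k^{\theta_0}(e^{(n+1-k)f}u),
\end{equation*}
recalled in the preliminaries, and apply it to the pulled-back function to obtain $A^{\theta_0}_k(u^\Phi)=A^{\theta_0}_k(u)$ after a direct change of variables. Thus the ratio $A^{\theta_0}_k/B^{\theta_0}_k$ is preserved by $\mathcal{A}(S^{2n+1})$, and so $u^\Phi$ is a positive local minimizer of $Y_k(S^{2n+1})$ whenever $u$ is.

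Next, I would apply Lemma B.1 of \cite{FL10}, which asserts that for any nonnegative $u\in S^{k,2}(S^{2n+1})$ with $B^{\theta_0}_k(u)>0$ one can choose $\Phi\in\mathcal{A}(S^{2n+1})$ so that the transformed function $u^\Phi$ satisfies the balanced conditions $\int_{S^{2n+1}}\xi_i|u^\Phi|^p\,d\xi=0$ for $i=1,\ldots,2n+2$. The positivity of $u$ is preserved by the transformation since $|J_\Phi|^{1/p}>0$, so $u^\Phi$ is positive. Combining with the CR invariance established in the previous step, $u^\Phi$ is a positive minimizer satisfying \eqref{balanced}, which proves the corollary.

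I do not foresee a serious technical obstacle here; the only point requiring care is correctly tracking the exponent of $|J_\Phi|$ so that the conformal covariance formula for $P_k^{\theta_0}$ combines cleanly with the $L^p$ Jacobian, and confirming that Lemma B.1 of \cite{FL10} applies in exactly the normalization used here (it does, as it was already used verbatim in the proof of the preceding proposition).
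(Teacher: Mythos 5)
Your proposal is correct and follows essentially the same route the paper intends for this corollary: Lemma B.1 of \cite{FL10} to enforce the balanced conditions, combined with the CR covariance of $A^{\theta_0}_k$ and $B^{\theta_0}_k$ under $u\mapsto |J_\Phi|^{1/p}\Phi^*u$, exactly as in the proof of the preceding proposition for minimizing sequences. One small normalization slip: with $\Phi^*\theta_0=e^{2f}\theta_0$ the volume Jacobian is $|J_\Phi|=e^{Qf}=e^{2(n+1)f}$ rather than $e^{(n+1)f}$, which is what actually yields $|J_\Phi|^{1/p}=e^{(n+1-k)f}$, the exponent you correctly use in combining with the covariance law for $P_k^{\theta_0}$.
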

Next, we classify optimizers of sharp Sobolev inequalities (\ref{sharpSo}) following Frank--Lieb argument developed in \cite{Case19}. As explained in the introduction, the proof of Theorem \ref{class} consists of three ingredients. The desired commutator identity and spectral estimate are given in the following two results.
\begin{theorem}\label{commutator}
Let $(S^{2n+1}, T^{1,0}S^{2n+1}, \theta_0)$ be the CR unit sphere and $(\C^{n+2}\backslash\{0\}, {\boldsymbol{g[\rho]}})$ be its ambient space. Let $ P_{w,w'}$ denote the GJMS operator given in (\ref{GJMS}). Then
\begin{equation}
    \sum_{j=1}^{n+1}\bar{z}_j \left[P_{w,w'},z_j\right]=-k(w'-k+1) P_{w-1,w'}.
\end{equation}
\end{theorem}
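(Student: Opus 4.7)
The natural setting is the ambient space $(\C^{n+2}\setminus\{0\},\boldsymbol{g[\rho]})$ of Section~2, in which $\boldsymbol{P}_{w,w'}=\boldsymbol{\Delta}^k/2^k$ and $P_{w,w'} = \boldsymbol{M}_{k-w,k-w'}\circ\boldsymbol{P}_{w,w'}\circ\boldsymbol{M}_{w,w'}$ by Definition~\ref{P}. Because $\boldsymbol{g[\rho]}$ is flat in the homogeneous coordinates $(\zeta_0,\ldots,\zeta_{n+1})$, the ordinary partial derivatives commute with $\boldsymbol{\Delta}$, and a one-line computation gives $[\boldsymbol{\Delta},\zeta_j] = c_j\partial_{\bar\zeta_j}$ where $c_0$ and $c_j$ for $j\geq 1$ have opposite signs (owing to the Lorentz signature of $\boldsymbol{g[\rho]}$). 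Iterating yields $[\boldsymbol{\Delta}^k,\zeta_j] = kc_j\boldsymbol{\Delta}^{k-1}\partial_{\bar\zeta_j}$.

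Writing $z_j = \zeta_j/\zeta_0$ and setting $\boldsymbol{u}:=\zeta_0^{w-1}\bar\zeta_0^{w'}f\in\tilde{\mathcal{E}}(w-1,w')$, I would first express
\begin{displaymath}
[P_{w,w'},z_j]f = \frac{1}{2^k}\zeta_0^{k-w-1}\bar\zeta_0^{k-w'}\bigl[\zeta_0\boldsymbol{\Delta}^k(\zeta_j\boldsymbol{u}) - \zeta_j\boldsymbol{\Delta}^k(\zeta_0\boldsymbol{u})\bigr].
\end{displaymath}
Expanding both $\boldsymbol{\Delta}^k$-terms via the commutator formula, the leading $\zeta_0\zeta_j\boldsymbol{\Delta}^k\boldsymbol{u}$ contributions cancel, and the residual is a linear combination of $\zeta_0\boldsymbol{\Delta}^{k-1}\partial_{\bar\zeta_j}\boldsymbol{u}$ and $\zeta_j\boldsymbol{\Delta}^{k-1}\partial_{\bar\zeta_0}\boldsymbol{u}$ whose signs align precisely because $c_0 = -c_j$.

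The crucial step is to multiply by $\bar z_j = \bar\zeta_j/\bar\zeta_0$, sum over $j=1,\ldots,n+1$, and apply the geometric identity $\sum_{j=1}^{n+1}|\zeta_j|^2 = \zeta_0\bar\zeta_0$ on $\mathcal{N}=\pi_{\mathcal{X}}^{-1}(S^{2n+1})$ (the ambient lift of $\sum|z_j|^2=1$). This fuses the $\zeta_j\partial_{\bar\zeta_0}$-contribution with the $\bar\zeta_j\partial_{\bar\zeta_j}$-sum to extend the latter to include the index $j=0$, producing
\begin{displaymath}
\sum_{j=1}^{n+1}\bar z_j[P_{w,w'},z_j]f = -\frac{k}{2^{k-1}}\,\zeta_0^{k-w}\bar\zeta_0^{k-w'-1}\sum_{j=0}^{n+1}\bar\zeta_j\boldsymbol{\Delta}^{k-1}\partial_{\bar\zeta_j}\boldsymbol{u}.
\end{displaymath}
Since $\partial_{\bar\zeta_j}$ commutes with $\boldsymbol{\Delta}$, the inner sum collapses to $\bar Z\boldsymbol{\Delta}^{k-1}\boldsymbol{u}$, where $\bar Z=\sum_{j=0}^{n+1}\bar\zeta_j\partial_{\bar\zeta_j}$ is the antiholomorphic Euler field.

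The final step uses the elementary relation $[\bar Z,\boldsymbol{\Delta}] = -\boldsymbol{\Delta}$ (verified in one line from the flat expression for $\boldsymbol{\Delta}$), which gives $\bar Z\boldsymbol{\Delta}^{k-1} = \boldsymbol{\Delta}^{k-1}(\bar Z-(k-1))$. Applied to $\boldsymbol{u}\in\tilde{\mathcal{E}}(w-1,w')$ on which $\bar Z$ acts as multiplication by $w'$, the inner sum becomes $(w'-k+1)\boldsymbol{\Delta}^{k-1}\boldsymbol{u}$. Recognizing the outside factor $\zeta_0^{k-w}\bar\zeta_0^{k-w'-1}\boldsymbol{\Delta}^{k-1}\boldsymbol{u}/2^{k-1}$ as the ambient expression for $P_{w-1,w'}f$ (Definition~\ref{P} applied with $k-1$) yields the desired identity $\sum\bar z_j[P_{w,w'},z_j] = -k(w'-k+1)P_{w-1,w'}$. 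The main obstacle is careful bookkeeping with two sets of signs---the Lorentz signature of $\boldsymbol{g[\rho]}$ (which makes the index $j=0$ behave oppositely from $j\geq 1$) and the bidegree shifts under $\boldsymbol{M}_{v,v'}$---but the conceptual payoff is that once the $\sum|\zeta_j|^2=\zeta_0\bar\zeta_0$ identity fuses the various pieces, the whole calculation collapses to a single elementary commutator between the Euler field $\bar Z$ and $\boldsymbol{\Delta}^{k-1}$.
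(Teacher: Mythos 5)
Your proposal is correct and follows essentially the same route as the paper: lift the commutator to the flat ambient Lorentz--K\"ahler space via Definition \ref{P}, compute $[\boldsymbol{\Delta}^k,\zeta_j]$ using flatness (the paper packages this as Lemma \ref{com} with the vector fields $X_j=\zeta_0\boldsymbol{\nabla}\zeta_j-\zeta_j\boldsymbol{\nabla}\zeta_0$), multiply by $\bar\zeta_j$, sum, and use $\sum_{j=1}^{n+1}|\zeta_j|^2=\zeta_0\bar\zeta_0$ on $\mathcal{N}$ to assemble $\zeta_0\overline{Z}$, which by homogeneity contributes the factor $w'-k+1$ and identifies the remainder with $P_{w-1,w'}$. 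Your evaluation of $\overline{Z}\boldsymbol{\Delta}^{k-1}$ via $[\overline{Z},\boldsymbol{\Delta}]=-\boldsymbol{\Delta}$ rather than acting on the output bidegree is an equivalent, cosmetic variation.
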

\begin{proposition}\label{spectral}
Let $(S^{2n+1}, T^{1,0}S^{2n+1}, \theta_0)$ be the CR unit sphere with the volume element $d\xi$. Suppose that $u$ is a positive local minimizer of $Y_{k}(S^{2n+1})$. Suppose additionally that 
\begin{equation}\label{cbalanced}
    \int_{S^{2n+1}}z_j u^p d\xi=0, \quad j=1,\cdots, n+1,
\end{equation}
where $z_1,\cdots, z_{n+1}$ are coordinates on $\C^{n+1}$ and we regard $S^{2n+1}\subset \C^{n+1}$ as the unit sphere. Then
\begin{equation}\label{spectral3}
    \sum_{j=1}^{n+1}\int_{S^{2n+1}}\bar{z}_j u[P_k^{\theta_0},z_j](u)d\xi \geqslant (p-2) \int_{S^{2n+1}}uP^{\theta_0}_k(u)d\xi,
\end{equation}
where 
\begin{displaymath}
    [P_k^{\theta_0},z_j](u):=P_k^{\theta_0}(z_j u)-z_jP_k^{\theta_0}(u).
\end{displaymath}
\end{proposition}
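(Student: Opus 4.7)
My plan is to deduce (\ref{spectral3}) from the second variation of the constrained extremal problem at $u$. After rescaling so that $B^{\theta_0}_k(u) = 1$ (so that $Y_k := Y_k(S^{2n+1}) = \int_{S^{2n+1}} u P^{\theta_0}_k u\, d\xi$), the Euler--Lagrange equation for $u$ reads $P^{\theta_0}_k u = Y_k u^{p-1}$. Parametrizing a curve $u_t = u + t\phi + t^2\psi + O(t^3)$ on the constraint surface $\{B^{\theta_0}_k = 1\}$ for a real tangent direction $\phi$ (i.e.\ $\int u^{p-1}\phi\, d\xi = 0$) and plugging in the expansion $(u+t\phi)^p = u^p + p u^{p-1}t\phi + \tfrac{p(p-1)}{2}u^{p-2}t^2\phi^2 + O(t^3)$ to determine $\psi$, the $t^2$-coefficient of $A^{\theta_0}_k(u_t)$ is $\int \phi P^{\theta_0}_k\phi\, d\xi - (p-1)Y_k \int u^{p-2}\phi^2\, d\xi$. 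Local minimality forces this to be nonnegative, yielding the key spectral inequality: for every real $\phi \in S^{k,2}(S^{2n+1};\R)$ with $\int u^{p-1}\phi\, d\xi = 0$,
\begin{equation}\label{sv}
    \int_{S^{2n+1}} \phi\, P^{\theta_0}_k \phi\, d\xi \;\geq\; (p-1)\, Y_k \int_{S^{2n+1}} u^{p-2} \phi^2\, d\xi.
\end{equation}

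The next step is to feed the balanced hypothesis into (\ref{sv}). Writing $z_j = x_j + i y_j$, condition (\ref{cbalanced}) is equivalent to $\int u^p x_j\, d\xi = \int u^p y_j\, d\xi = 0$ for each $j$, so both $\phi_j := x_j u$ and $\tilde\phi_j := y_j u$ are admissible tangent perturbations. Applying (\ref{sv}) to each and summing over $j = 1, \ldots, n+1$, the right-hand side collapses via $\sum_j (x_j^2 + y_j^2) = \sum_j |z_j|^2 = 1$ on $S^{2n+1}$ together with $\int u^p\, d\xi = 1$ to the clean bound $(p-1)Y_k$. For the left-hand side, self-adjointness of $P^{\theta_0}_k$ yields
\begin{equation*}
    \int (\phi_1 - i\phi_2)\, P^{\theta_0}_k(\phi_1 + i\phi_2)\, d\xi \;=\; \int \phi_1 P^{\theta_0}_k\phi_1\, d\xi + \int \phi_2 P^{\theta_0}_k\phi_2\, d\xi
\end{equation*}
for real $\phi_1, \phi_2$ (the imaginary cross terms cancel), which reassembles the two real pieces at index $j$ into $\int \bar z_j u\, P^{\theta_0}_k(z_j u)\, d\xi$. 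Splitting $P^{\theta_0}_k(z_j u) = [P^{\theta_0}_k, z_j]u + z_j P^{\theta_0}_k u$, invoking the Euler--Lagrange equation, and using $\sum_j |z_j|^2 = 1$ once more, the summed left-hand side equals $\sum_j \int \bar z_j u\, [P^{\theta_0}_k, z_j] u\, d\xi + Y_k$. Combining with the lower bound $(p-1)Y_k$ and rearranging produces (\ref{spectral3}).

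The main subtlety -- and essentially the only nontrivial choice -- is the decision to split $z_j u$ into its real and imaginary parts and apply the real-valued form of (\ref{sv}) to each piece separately. This gives the tight coefficient $(p-1)Y_k$, which after regrouping recovers the constant $(p-2)$ in (\ref{spectral3}). Using $z_j u$ directly as a single complex test function would give only the weaker coefficient $pY_k/2$ in the second variation, because a purely imaginary perturbation of the real function $u$ does not change $|u|$ to first order; this leads to $(p-2)/2$ on the right of (\ref{spectral3}) instead of $(p-2)$, which is too weak for the Frank--Lieb classification argument. Apart from this choice, the remainder of the proof is algebraic bookkeeping.
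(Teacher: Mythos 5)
Your proposal is correct and follows the same route as the paper: compute the second variation on the constraint surface to obtain $\int \phi\, P^{\theta_0}_k\phi\, d\xi \geq (p-1)A^{\theta_0}_k(u)\int u^{p-2}\phi^2\, d\xi$, apply it to the real and imaginary parts $x_ju$ and $y_ju$ of $z_ju$ (admissible by the balanced condition), sum over $j$ using $\sum_j|z_j|^2=1$, recombine into the Hermitian form $\int \bar z_j u\, P^{\theta_0}_k(z_j u)\, d\xi$ via self-adjointness, and rearrange with the commutator identity and the Euler--Lagrange equation. The concluding remark about why one must split into real and imaginary parts rather than test directly with the complex function $z_ju$ is a useful observation but is not part of the argument itself.
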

The proof of Theorem \ref{commutator} needs the following lemma. 
\begin{lemma}\label{com}
Let $(S^{2n+1}, T^{1,0}S^{2n+1}, \theta_0)$ be the CR unit sphere and $(\C^{n+2}\backslash\{0\}, {\boldsymbol{g[\rho]}})$ be its ambient space. Given integers $k\geqslant 1$ and $1\leqslant j\leqslant n+1$, it holds that
\begin{equation}
    \zeta_0 \boldsymbol{\Delta}^k\zeta_j-\zeta_j \boldsymbol{\Delta}^k\zeta_0=-2k\boldsymbol{\mathcal{L}}_{{X_j}}\circ \boldsymbol{\Delta}^{k-1},
\end{equation}
where $\zeta$ is the homogeneous coordinate system defined in (\ref{homo}), and the vector field $X_j$ is defined by
\begin{equation}
    X_j=\zeta_0\boldsymbol{\nabla}\zeta_j-\zeta_j\boldsymbol{\nabla}\zeta_0.
\end{equation}
\end{lemma}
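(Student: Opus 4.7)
My plan is to exploit the fact that the ambient Lorentz--K\"ahler metric $\boldsymbol{g[\rho]}$ is flat in the homogeneous coordinates $(\zeta_0, \ldots, \zeta_{n+1})$, so that the ambient Laplacian $\boldsymbol{\Delta}$ is a constant-coefficient second-order differential operator. Under such a framework, all the commutators we need can be computed mechanically, and the identity becomes a short algebraic manipulation.

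First I would compute $[\boldsymbol{\Delta}, \zeta_a]$ directly for $a = 0, 1, \ldots, n+1$. Writing $\boldsymbol{\Delta}$ in the form $2g^{a\bar b}\partial_{\zeta_a}\partial_{\bar\zeta_b}$ (with the sign convention for $\boldsymbol{\Delta}$ matching the paper) and using $[\partial_{\bar\zeta_b}, \zeta_a] = 0$, $[\partial_{\zeta_a}, \zeta_b] = \delta_{ab}$, one obtains
\[
 [\boldsymbol{\Delta}, \zeta_a] = -2\boldsymbol{\nabla}\zeta_a,
\]
where $\boldsymbol{\nabla}\zeta_a = g^{\bar b a}\partial_{\bar\zeta_b}$ is (up to a sign) a single coordinate $\bar\partial$-derivative. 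The crucial point is that $\boldsymbol{\nabla}\zeta_a$ has constant coefficients, so it commutes both with $\boldsymbol{\Delta}$ and with every other $\boldsymbol{\nabla}\zeta_b$.

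Given this, a one-line induction on $k$ yields
\[
 [\boldsymbol{\Delta}^k, \zeta_a] = k\boldsymbol{\Delta}^{k-1}[\boldsymbol{\Delta}, \zeta_a] = -2k\,\boldsymbol{\Delta}^{k-1}\boldsymbol{\nabla}\zeta_a = -2k\,\boldsymbol{\nabla}\zeta_a\circ\boldsymbol{\Delta}^{k-1},
\]
the last equality using that $\boldsymbol{\nabla}\zeta_a$ commutes with $\boldsymbol{\Delta}$. Since $\zeta_0$ and $\zeta_j$ commute as multiplication operators, we may rewrite
\[
 \zeta_0\boldsymbol{\Delta}^k\zeta_j - \zeta_j\boldsymbol{\Delta}^k\zeta_0 = \zeta_0[\boldsymbol{\Delta}^k, \zeta_j] - \zeta_j[\boldsymbol{\Delta}^k, \zeta_0],
\]
and inserting the previous display gives
\[
 \zeta_0\boldsymbol{\Delta}^k\zeta_j - \zeta_j\boldsymbol{\Delta}^k\zeta_0 = -2k\bigl(\zeta_0\boldsymbol{\nabla}\zeta_j - \zeta_j\boldsymbol{\nabla}\zeta_0\bigr)\circ\boldsymbol{\Delta}^{k-1} = -2k\,\boldsymbol{\mathcal{L}}_{X_j}\circ \boldsymbol{\Delta}^{k-1},
\]
since the action of $X_j$ on functions coincides with $\boldsymbol{\mathcal{L}}_{X_j}$.

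The only real obstacle is bookkeeping the sign conventions: one has to verify that, with the normalizations for $\boldsymbol{\Delta}$ and $\boldsymbol{\nabla}$ used throughout the paper, the constant in $[\boldsymbol{\Delta}, \zeta_a] = c\,\boldsymbol{\nabla}\zeta_a$ is indeed $-2$ rather than $+2$. Once this initial computation is pinned down, the constant-coefficient nature of $\boldsymbol{\Delta}$ in the $\zeta$-coordinates makes every subsequent step essentially automatic --- in particular, the potential correction terms one would normally pick up when commuting $\zeta_0, \zeta_j$ past $\boldsymbol{\Delta}^{k-1}$ simply vanish because $\boldsymbol{\nabla}\zeta_0$ and $\boldsymbol{\nabla}\zeta_j$ already commute with $\boldsymbol{\Delta}$.
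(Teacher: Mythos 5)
Your proposal is correct and follows essentially the same route as the paper: both establish $[\boldsymbol{\Delta},\zeta_a]=-2\boldsymbol{\mathcal{L}}_{\boldsymbol{\nabla}\zeta_a}$ from flatness of $\boldsymbol{g[\rho]}$ in the homogeneous coordinates, upgrade it by induction (using that $\boldsymbol{\nabla}\zeta_a$ commutes with $\boldsymbol{\Delta}$) to $[\boldsymbol{\Delta}^k,\zeta_a]=-2k\,\boldsymbol{\mathcal{L}}_{\boldsymbol{\nabla}\zeta_a}\circ\boldsymbol{\Delta}^{k-1}$, and then combine the two commutators, the cross terms $\zeta_0\zeta_j\boldsymbol{\Delta}^k$ cancelling exactly as in the paper. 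Your remark about pinning down the sign convention for $\boldsymbol{\Delta}$ (so that the constant is $-2$ rather than $+2$) is the only point the paper treats as immediate, and it does not affect the structure of the argument.
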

\begin{proof}
We begin with a simple observation about commutators of the ambient Laplacian. Since ${\boldsymbol{g[\rho]}}$ is a flat metric in homogeneous coordinates $\zeta$, for coordinate functions $\zeta_i$, $0\leqslant i\leqslant n+1$, we have 
\begin{equation}
    \left[\boldsymbol{\Delta}, \zeta_i\right]=-2\boldsymbol{\mathcal{L}}_{\boldsymbol{\nabla}\zeta_i}, \quad \left[\boldsymbol{\nabla}, \boldsymbol{\mathcal{L}}_{\boldsymbol{\nabla}\zeta_i}\right]=0. 
\end{equation}
By induction, we have that 
\begin{equation}\label{com1}
    \left[\boldsymbol{\Delta}^k, \zeta_i\right]=-2k \boldsymbol{\mathcal{L}}_{\boldsymbol{\nabla}\zeta_i}\circ \boldsymbol{\Delta}^{k-1}, \quad k\in \N.
\end{equation}
Therefore, by (\ref{com1}), we have
\begin{displaymath}
    \begin{split}
        \zeta_0 \boldsymbol{\Delta}^k\zeta_j-\zeta_0 \zeta_j \boldsymbol{\Delta}^k&=\zeta_0 \left[\boldsymbol{\Delta}^k, \zeta_j\right]=-2k\boldsymbol{\mathcal{L}}_{\zeta_0\boldsymbol{\nabla}\zeta_j}\circ \boldsymbol{\Delta}^{k-1},\\
        \zeta_j\zeta_0 \boldsymbol{\Delta}^k-\zeta_j \boldsymbol{\Delta}^k\zeta_0&=-\zeta_j \left[\boldsymbol{\Delta}^k, \zeta_0\right]=2k\boldsymbol{\mathcal{L}}_{\zeta_j\boldsymbol{\nabla}\zeta_0}\circ \boldsymbol{\Delta}^{k-1}.
    \end{split}
\end{displaymath}
The final conclusion follows from the definition of $X_j$.
\end{proof}
We now prove the commutator identity involving the GJMS operators needed to execute Frank-Lieb argument.
\begin{proof}[Proof of Theorem \ref{commutator}]
Applying Lemma \ref{com} to each $\zeta_j$, $1\leqslant j\leqslant n+1$, multiplying $\bar{\zeta}_j$ and taking the sum, we obtain
\begin{equation}\label{com3}
    \sum_{j=1}^{n+1} \left(\zeta_0 \bar{\zeta}_j \boldsymbol{\Delta}^k\zeta_j-\zeta_j \bar{\zeta}_j\boldsymbol{\Delta}^k\zeta_0\right)=-2k\boldsymbol{L}_{\sum_{j=1}^{n+1}\bar{\zeta}_jX_j}\circ \boldsymbol{\Delta}^{k-1}.
\end{equation}
By the definition of the homogeneous coordinates (\ref{homo}), we notice that both sides of (\ref{com3}) map $\tilde{\mathcal{E}}(w-1, w')$ to $\tilde{\mathcal{E}}(w-k, w'-k+1)$. Specializing to ${\boldsymbol{P}}_{w,w'}$ yields
\begin{equation}
    \sum_{j=1}^{n+1} \zeta_0 \bar{\zeta}_j {\boldsymbol{P}}_{w,w'}\zeta_j=\zeta_0 \bar{\zeta}_0{\boldsymbol{P}}_{w,w'}\zeta_0-k{\boldsymbol{L}}_{\sum_{j=1}^{n+1}\bar{\zeta}_jX_j}\circ {\boldsymbol{P}}_{w-1,w'}.
\end{equation}
Since ${\boldsymbol{g[\rho]}}$ is a flat Lorentz-K\"ahler metric, we find that on $\mathcal{N}$
\begin{equation}
\begin{split}
   \sum_{j=1}^{n+1}\bar{\zeta}_jX_j&=\sum_{j=1}^{n+1} \left(\zeta_0\bar{\zeta}_j\boldsymbol{\nabla}\zeta_j-\zeta_j\bar{\zeta}_j\boldsymbol{\nabla}\zeta_0\right)\\
   &=\sum_{j=1}^{n+1} \left(\zeta_0\bar{\zeta}_j\frac{\partial}{\partial \bar{\zeta}_j}+\zeta_j\bar{\zeta}_j\frac{\partial}{\partial \bar{\zeta}_0}\right)\\
   &=\zeta_0\sum_{j=0}^{n+1} \bar{\zeta}_j\frac{\partial}{\partial \bar{\zeta}_j}\\
   &=\zeta_0\overline{Z}.
   \end{split}
\end{equation}
Hence, we have
\begin{equation}
    \boldsymbol{L}_{\sum_{j=1}^{n+1}\bar{\zeta}_jX_j}\circ {\boldsymbol{P}}_{w-1,w'}=(w'-k+1){\boldsymbol{P}}_{w-1,w'}.
\end{equation}
By Definition \ref{P}, composing with ${\boldsymbol{M}}_{w-1,w'}$ and ${\boldsymbol{M}}_{k-w-1,k-w'-1}$, yields the final conclusion.
\end{proof}
We now turn to the desired spectral estimate. The fact that $P^{\theta_0}_k$ is formally self-adjoint implies that if $u_t$ is a one-parameter family of functions in $S^{k,2}(S^{2n+1}; \R)$ with $u_0=u$, then
\begin{align}
   \label{variation1}\frac{d}{dt}\bigg|_{t=0}A^{\theta_0}_k(u)&=2\int_{S^{2n+1}}\dot{u}P^{\theta_0}_k (u) d\xi, \\ \label{variation2}
    \frac{d^2}{dt^2}\bigg|_{t=0}A^{\theta_0}_k(u)&=2\int_{S^{2n+1}}\dot{u}P^{\theta_0}_k (\dot{u}) d\xi+2\int_{S^{2n+1}}\ddot{u}P^{\theta_0}_k (u) d\xi. 
\end{align}
for $\dot{u}:=\frac{\partial}{\partial t}\big|_{t=0}u_t$ and $\ddot{u}:=\frac{\partial^2}{\partial t^2}\big|_{t=0}u_t$.
\begin{proof}[Proof of Proposition \ref{spectral}]
Let $v\in C^{\infty}(S^{2n+1};\R)$ be such that
\begin{equation}\label{ccbalanced}
    \int_{S^{2n+1}}v u^{p-1}d\xi=0.
\end{equation}
Then $u_t=\|u+tv\|^{-1}_{p}(u+tv)$ defines a smooth curve with $B^{\theta_0}_k(u_t)=1$ and $u_0=u$, $\dot{u}_t\big|_{t=0}=v$. Since $u$ is a critical point of $A^{\theta_0}_k$, it follows from (\ref{variation1}) that
\begin{displaymath}
    P^{\theta_0}_k(u)=A^{\theta_0}_k(u)u^{p-1}.
\end{displaymath}
Since $u$ is a local minimizer, $\frac{d^2}{dt^2}\bigg|_{t=0}A^{\theta_0}_k(u_t)\geqslant 0$. Expanding this using (\ref{variation2}) and the above display yields
\begin{equation}\label{spectral2}
    \int_{S^{2n+1}}v P^{\theta_0}_k(v)d\xi \geqslant (p-1)A^{\theta_0}_k(u)\int_{S^{2n+1}}u^{p-2}v^2d\xi.
\end{equation}
The assumption (\ref{cbalanced}) implies that for each coordinate function $z_j=x_j+i y_j$, $1\leqslant j\leqslant n+1$, the functions $x_j u$ and $y_j u$ satisfy (\ref{ccbalanced}). Since $P^{\theta_0}_k$ is self-adjoint, it follows from (\ref{spectral2}) that
\begin{displaymath}
\begin{split}
    \sum_{j=1}^{n+1}\int_{S^{2n+1}}\bar{z}_j uP_k^{\theta_0}(z_ju)d\xi&=\sum_{j=1}^{n+1}\left(\int_{S^{2n+1}}x_j uP_k^{\theta_0}(x_ju)d\xi+\int_{S^{2n+1}}y_j uP_k^{\theta_0}(y_ju)d\xi\right)\\
    &\geqslant (p-1)A^{\theta_0}_k(u)\int_{S^{2n+1}}\sum_{j=1}^{n+1}(x_j^2+y_j^2)u^{p}d\xi\\
    &=(p-1)A^{\theta_0}_k(u).
    \end{split}
\end{displaymath}
The final conclusion follows from the definition of $[P_k^{\theta_0},z_j]$.
\end{proof}

Proposition \ref{spectral} and Corollary \ref{transl} reduce the problem of classifying positive local minimizers of $A^{\theta_0}_k$ to the problem of showing that the only functions which satisfy (\ref{spectral3}) are the constants. This can be done for the operators $P_{w,w}$ and $P_{w-1,w}$ using the commutator identity in Theorem \ref{commutator}.
\begin{proof}[Proof of Theorem \ref{class}]
All computations in this proof are carried out with respect to the CR unit sphere $(S^{2n+1}, T^{1,0}S^{2n+1}, \theta_0)$ with the volume element $d\xi$. As discussed in \cite[Theorem 1.3]{FL10}, we may assume that the optimizer $u$ of (\ref{sharpSo}) is a positive real function; i.e. $u$ is a positive real minimizer of 
\begin{displaymath}
A^{\theta_0}_k(u)=\int_{S^{2n+1}}uP^{\theta_0}_k(u)d\xi
\end{displaymath}
with $B^{\theta_0}_k(u)=1$. By Corollary \ref{transl} we may assume that $u$ satisfies (\ref{cbalanced}). We conclude from Proposition \ref{spectral} that
\begin{displaymath}
\sum_{j=1}^{n+1}\int_{S^{2n+1}}\bar{z}_j u[P_k^{\theta_0},z_j](u)d\xi \geqslant (p-2) \int_{S^{2n+1}}uP^{\theta_0}_k(u)d\xi.
\end{displaymath}
Since $P_k^{\theta_0}=P_{w,w}$ with $k=2w+n+1$, combining this with Theorem \ref{commutator} yields
\begin{equation}\label{spe}
0\geqslant \int_{S^{2n+1}}u\left((p-2)P_{w,w}+k(w-k+1)P_{w-1,w}\right)(u)d\xi.
\end{equation}
It follows from the factorization in Theorem \ref{fact} that
\begin{equation}
    P_{w,w}=\prod_{j=0}^{k-1}L_{k-2j-1}, \quad P_{w-1,w}=\prod_{j=0}^{k-2}L_{k-2j-1}
\end{equation}
where $L_{\mu}$ is defined in (\ref{L}). Hence, by simple calculation, we find that
\begin{equation}\label{operator}
(p-2)P_{w,w}+k(w-k+1)P_{w-1,w}=(p-2)\left(\frac{1}{2}\Delta_b+\frac{i}{2}(1-k) T\right)\prod_{j=0}^{k-2}L_{k-2j-1}.
\end{equation}
From (\ref{B}) and (\ref{T}), we have that for $\alpha\in \R$,
\begin{equation}
    \Delta_b+i\alpha T=\frac{n+\alpha}{n}\Box_b+\frac{n-\alpha}{n}\overline{\Box}_b.
\end{equation}
Thus, $|\alpha|<n$ implies that the operator $\Delta_b+i\alpha T$ has trivial kernel because $\Box_b$ and $\overline{\Box}_b$ are nonnegative. Notice that $|k-1|<n$ and $|k-2j-1|<n$ for $j\in \{0,\cdots, k-2\}$. We conclude that the operator in (\ref{operator}) is nonnegative with kernel exactly equal to the constant functions. Combing this with (\ref{spe}) yields $u=1$. The final conclusion follows from the fact that if $\Phi\in {\mathcal{A}}(S^{2n+1})$, then 
\begin{displaymath}
|J_{\Phi}|(\eta)=\frac{C}{|1-\xi \cdot \bar{\eta}|^Q}, \quad \eta\in S^{2n+1}
\end{displaymath}
for some $C>0, \xi\in \C^{n+1}, |\xi|<1$.
\end{proof}

\section{Almost Sobolev inequalities}
In this section, we use Lemma \ref{ccp} to prove Aubin's almost sharp inequalities for GJMS operators on general CR manifolds. This argument is similar to the proof of Theorem \ref{improve}.

\begin{proof}[Proof of Theorem \ref{almost2}:]
Let $\alpha=C_{n,2k}+\epsilon$. If (\ref{Sobolev2}) is not true, then for any $j\in \N$, we can find a $F_j\in S^{k,2}(N)$ satisfying
\begin{equation}
    \left(\int_{N}|F_j|^p d\zeta\right)^{\frac{2}{p}}> \alpha\int_{N}\bar{F}_j \tilde{P}^{\theta}_k F_j d\zeta+j\int_N |F_j|^2 d\zeta.
\end{equation}
We may assume
\begin{displaymath}
    \left(\int_{N}|F_j|^p d\zeta\right)^{\frac{2}{p}}=1.
\end{displaymath}
Then
\begin{equation}
    \int_{N}\bar{F}_j \tilde{P}^{\theta}_k F_j d\zeta<\frac{1}{\alpha}, \quad \int_N |F_j|^2 d\zeta<\frac{1}{j}.
\end{equation}
It follows that $F_j\rightharpoonup 0$ weakly in $S^{k,2}(N)$. After passing to a subsequence, we assume as measures,
\begin{equation}
    \bar{F}_j \tilde{P}^{\theta}_k F_j d\zeta\to \sigma, \quad |F_j|^p d\zeta \to \nu.
\end{equation}
By Lemma \ref{ccp} we can find countably many points $\{x_i\}\in N$ such that
\begin{equation}\label{bubble}
    \nu=\sum_{i}\nu_i \delta_{x_i}, \quad \nu_i^{\frac{2}{p}}\leqslant C_{n,2k} \sigma_i
\end{equation}
here $\nu_i=\nu(x_i)$, $\sigma_i=\sigma(x_i)$. Then
\begin{equation}\label{volume}
    \nu(N)=1,\quad \sigma(N)<\frac{1}{\alpha}.
\end{equation}
Then by the concavity of the function $x^{\frac{2}{p}}$, we have
\begin{equation}
    1\leqslant \sum_i \nu_i^{\frac{2}{p}}\leqslant \sum_i C_{n,2k} \sigma_i=C_{n,2k} \sigma(N)=\frac{C_{n,2k}}{\alpha}.
\end{equation}
Hence,
\begin{displaymath}
    \alpha\leqslant C_{n,2k}.
\end{displaymath}
This contradicts the choice of $\alpha$.
\end{proof}

\section{Existence of minimizers}
Analogous to the CR Yamabe problem \cite{JL87}, we prove that minimizers of $Y_k(N)$ exist if $Y_k(N)<Y_k(\H^n)$.

\begin{proof}[Proof of Theorem \ref{highCR}]
Let $\{F_i\}$ be a minimizing sequence for $Y_k(N)$ with $B^{\theta}_k(F_i)=1$; i.e.
\begin{equation}\label{energy}
    Y_k(N)=\lim_{i\to \infty} \int_N F_i P_k^{\theta}(F_i) \theta\wedge d\theta^n.
\end{equation}
In particular, $\{F_i\}$ is bounded in $S^{k,2}(N;\R)$. By taking a subsequence if necessary, we assume that there exists a limit function $F\in S^{k,2}(N;\R)$ such that
\begin{itemize}
    \item $F_i\rightharpoonup F$ in $S^{k,2}(N;\R)$;
    \item $F_i\to F$ in $L^{2}(N;\R)$; and 
    \item $F_i\to F$ almost everywhere in $N$.
\end{itemize}
Now, Theorem \ref{almost2} implies that for any $\epsilon>0$, there is a constant $C(\epsilon)>0$ such that for all $i$, 
\begin{equation}\label{non}
    \left(\int_{N}|F_i|^p d\zeta\right)^{\frac{2}{p}}\leqslant \left(C_{n,2k}+\epsilon\right)\int_{N} F_i \tilde{P}^{\theta}_k F_i d\zeta+C(\epsilon)\int_N |F_i|^2 d\zeta.
\end{equation}
If the limit function $F$ is zero, by the compact embedding $S^{k,2}(N)\hookrightarrow S^{k',2}(N)$ for $k'<k$, all lower order terms in $\int_{N} F_i {P}^{\theta}_k F_i d\zeta$ will vanish as $i\to \infty$, i.e.
\begin{equation}
    Y_k(N)=\lim_{i\to \infty}\int_{N} F_i {P}^{\theta}_k F_i d\zeta= \lim_{i\to \infty}\int_{N} F_i \tilde{P}^{\theta}_k F_i d\zeta.
\end{equation}
Thus for any $\delta>0$, there is an integer $I$ such that for all $i>I$, we have
\begin{displaymath}
\int_{N} F_i \tilde{P}^{\theta}_k F_i d\zeta<Y_k(N)+\delta.
\end{displaymath}
Combining this with (\ref{non}) yields
\begin{displaymath}
    1\leqslant (Y_k^{-1}(\H^n)+\epsilon)(Y_k(N)+\delta).
\end{displaymath}
Since $Y_k(N)<Y_k(\H^n)$, we may choose $\epsilon$ and $\delta$ such that $(Y_k^{-1}(\H^n)+\epsilon)(Y_k(N)+\delta)<1$, a contradiction.

Finally, we set $G_i=F_i-F$. Then
\begin{itemize}
    \item $G_i\rightharpoonup 0$ in $S^{k,2}(N;\R)$;
    \item $G_i\to 0$ in $L^{2}(N;\R)$; and 
    \item $G_i\to 0$ almost everywhere in $N$.
\end{itemize}
Then Lemma 2.6 in \cite{Lieb83} implies that
\begin{displaymath}
    \|F\|^p_p=1-\lim_{i\to \infty} \|G_i\|^p_p.
\end{displaymath}
In particular, $\|F\|^p_p\in (0,1]$ and $\lim_{i\to \infty} \|G_i\|^p_p\in [0,1)$. We deduce that
\begin{equation}\label{concave}
    \lim_{i\to \infty} \|G_i\|^2_p\geqslant 1-\|F\|^2_p
\end{equation}
with equality if and only if $\|G_i\|_p\to 0$ as $i\to \infty$. Since $G_i\rightharpoonup 0$, it holds that
\begin{displaymath}
\begin{split}
    \lim_{i\to \infty}\int_N F_i {P}_k^{\theta}(F_i) \theta\wedge d\theta^n &=\int_N F {P}_k^{\theta}(F) \theta\wedge d\theta^n+\lim_{i\to \infty}\int_N G_i {P}_k^{\theta}(G_i) \theta\wedge d\theta^n\\
    &=\int_N F {P}_k^{\theta}(F) \theta\wedge d\theta^n+\lim_{i\to \infty}\int_N G_i \tilde{P}_k^{\theta}(G_i) \theta\wedge d\theta^n.
    \end{split}
\end{displaymath}
Therefore
\begin{equation}\label{energy4}
    Y_k(N)=A^{\theta}_k(F)+\lim_{i\to \infty}\int_N G_i \tilde{P}_k^{\theta}(G_i) \theta\wedge d\theta^n.
\end{equation}
From (\ref{concave}) and the definition of $Y_k(N)$, when $Y_k(N)\geqslant 0$, we deduce that
\begin{equation}\label{energy2}
\lim_{i\to \infty}\int_N G_i {P}_k^{\theta}(G_i) \theta\wedge d\theta^n\geqslant Y_k(N)\lim_{i\to \infty} \|G_i\|^2_p\geqslant Y_k(N)(1-\|F\|^2_p).
\end{equation}
When $Y_k(N)<0$, it holds automatically that 
\begin{equation}\label{energy3}
    \lim_{i\to \infty}\int_N G_i {P}_k^{\theta}(G_i) \theta\wedge d\theta^n\geqslant Y_k(N)(1-\|F\|^2_p).
\end{equation}
because $\lim_{i\to \infty}A^{\theta}_k(G_i)$ is nonnegative. Moreover, equalities can be attained in (\ref{energy2}) and (\ref{energy3}) if and only if $\|G_i\|_p\to 0$ as $i\to \infty$. Combining this with (\ref{energy4}) yields
\begin{equation}
    Y_k(N)B^{\theta}_k(F)\geqslant A^{\theta}_k(F).
\end{equation}
It follows from the definition of $Y_k(N)$ that the equality holds. In particular, $A^{\theta}_k(F)=Y_k(N)$ and $B^{\theta}_k(F)=1$ which implies that $F_i\to F$ in $S^{k,2}(N;\R)$ and $F$ is a minimzier of $Y_k(N)$.

As a critical point of $Y_k(N)$, $F$ satisfies
\begin{equation}\label{EL}
    P^{\theta}_kF=Y_k(N)|F|^{p-2}F.
\end{equation}
By Lemma \ref{regularity} below, we obtain that $F\in L^r(N)$ for all $r\geqslant 1$. Then, $P^{\theta}_kF\in L^r(N)$ for all $r\geqslant 1$. It follows from the regularity theorem \cite[Theorem 6.1]{Folland75} that $F\in S^{2k,r}(N)$ for all $r\geqslant 1$, and from the embedding theorem in \cite[Proposition 5.7(a)]{JL87} that $F\in \Gamma_{\beta}(N)$ for all $\beta<2k$. Plugging this result into (\ref{EL}) yields $F\in C^{2k}(N)$.
\end{proof}
\begin{remark}
When $k\geqslant 2$, it is difficult to know whether the minimizer is positive. We refer the readers to \cite{HY14} for some sufficient conditions in the study of $Q$-curvature in conformal geometry. If the minimizer $F$ is positive, iterating the regularity argument in Theorem \ref{highCR} shows that $F$ is smooth. 
\end{remark}

\begin{lemma}\label{regularity}
Let $(N^{2n+1}, T^{1,0}N)$ be a compact strictly pseudoconvex CR manifold with contact form $\theta$. Denote $Q=2n+2$ and $k\in\N$ with $k<n+1$. Let $f$ be a real-valued function defined on $N$ and $u\in S^{k,2}(N;\R)$ be a weak solution of 
\begin{equation}\label{f}
 P^{\theta}_k u=fu.
\end{equation}
If $f\in L^{\frac{Q}{2k}}(N)$, then $u\in L^r(N)$ for all $r\geqslant 1$.
\end{lemma}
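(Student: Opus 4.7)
The plan is a Brezis--Kato type bootstrap. A priori $u\in S^{k,2}(N)\hookrightarrow L^{p}(N)$ with $p=2Q/(Q-2k)$ by the Folland--Stein Sobolev embedding, and the goal is to upgrade this to arbitrarily large $L^r$ by iteratively exploiting the equation $P^\theta_k u = fu$ together with the critical integrability of $f$.

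Let $G$ denote the Green's operator of $P^\theta_k$ modulo its (finite-dimensional, smooth) kernel, so that $u = G(fu) + \Pi u$ with $\Pi u\in C^{\infty}(N)$. Since $P^\theta_k$ is subelliptic of order $2k$, standard parametrix constructions yield the subelliptic Hardy--Littlewood--Sobolev inequality: $G\colon L^s(N)\to L^{s^*}(N)$ is bounded for $1<s<Q/(2k)$ with $1/s^* = 1/s - 2k/Q$, and $G$ sends $L^s(N)$ into $L^r(N)$ for every $r<\infty$ when $s\geqslant Q/(2k)$.

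For $L>0$, split $f = f_L + \tilde f_L$ with $|f_L|\leqslant L$; dominated convergence gives $\|\tilde f_L\|_{L^{Q/(2k)}}\to 0$ as $L\to\infty$. Suppose inductively that $u\in L^q(N)$ with $p\leqslant q < Q/(2k)$ and set $q^* = Qq/(Q-2kq) > q$. H\"older's inequality combined with HLS shows that $v\mapsto G(\tilde f_L v)$ has operator norm on $L^{q^*}(N)$ bounded by $C\|\tilde f_L\|_{L^{Q/(2k)}}$, and similarly on $L^q(N)$. Choose $L$ so large that this norm is $<1/2$; then $T_L := I - G(\tilde f_L\,\cdot\,)$ is invertible on both $L^q(N)$ and $L^{q^*}(N)$ by Neumann series.

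Now $T_L u = \Pi u + G(f_L u)$, and the right-hand side lies in $L^{q^*}(N)$: $\Pi u$ is smooth, and $f_L u\in L^q$ with $\|f_L u\|_{L^q}\leqslant L\|u\|_{L^q}$, so HLS yields $G(f_L u)\in L^{q^*}$. Invertibility of $T_L$ on $L^q(N)$ identifies $u$ uniquely as the preimage in $L^{q^*}(N)$, so $u\in L^{q^*}(N)$. Iterating, $1/q$ decreases by $2k/Q$ at each step, so after finitely many iterations we reach $q\geqslant Q/(2k)$; at that stage the same absorption argument, now using the improved mapping properties of $G$, yields $u\in L^r(N)$ for every $r<\infty$. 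The main technical point is the subelliptic HLS inequality for the Green's operator of $P^\theta_k$; once it is in hand, the splitting-and-absorption device converts the critical integrability of $f$ into a genuine gain at every iteration, which is the essential obstruction overcome.
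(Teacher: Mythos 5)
Your proof is correct, and it belongs to the same Brezis--Kato family as the paper's, but the decomposition and the structure of the argument are genuinely different. You truncate only $f=f_L+\tilde f_L$ with $f_L$ bounded and $\|\tilde f_L\|_{L^{Q/(2k)}}$ small, so the ``good'' term $f_Lu$ is only as integrable as $u$ itself; this forces a true iteration, gaining $2k/Q$ in $1/q$ at each step through the subelliptic HLS mapping of the Green's operator, plus a separate regime (the improved mapping of $G$ on $L^s$, $s\geqslant Q/(2k)$) once the exponent crosses $Q/(2k)$. The paper instead follows \cite{DHL00}: it truncates \emph{both} $f$ and $u$ on the level sets $E_l\cap F_m$ and rescales $f$ there by $1/K$, so the remainder $h_\epsilon=(f-g_\epsilon)u$ lies in $L^\infty(N)$; the equation becomes $(\mathrm{Id}-\mathcal{H}_\epsilon)u=\left(P^\theta_k\right)^{-1}(h_\epsilon)$ with right-hand side already in every $L^r$, and each target exponent $r\geqslant\frac{2Q}{Q-2k}$ is reached in a single stroke by inverting $\mathrm{Id}-\mathcal{H}_\epsilon$ on $L^r$ via the same smallness-plus-Neumann-series absorption you use, with no iteration over exponents. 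The two proofs also handle the kernel of $P^\theta_k$ differently (you pass to the Green's operator plus the smooth projection $\Pi$; the paper adds a positive constant, which merely shifts $f$ by a constant and keeps it in $L^{Q/(2k)}$), and your key analytic input, $G\colon L^s\to L^{s^*}$ with $1/s^*=1/s-2k/Q$, is exactly the combination of Folland's $L^{\hat r}\to S^{2k,\hat r}$ regularity and the Folland--Stein embedding that the paper cites, so no new technology is required. What each approach buys: yours is the standard, flexible scheme that never needs $u$ to be bounded on a large set; the paper's more elaborate truncation buys a shorter, iteration-free proof at each exponent. One small point to make explicit: when $6k\geqslant Q$ the starting exponent $p=\frac{2Q}{Q-2k}$ already satisfies $p\geqslant\frac{Q}{2k}$, so your inductive range $p\leqslant q<\frac{Q}{2k}$ is empty and you begin directly in the second regime; your final sentence covers this implicitly, but it deserves a word.
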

\begin{proof}
We proceed as in \cite{DHL00}. As a starting point, we claim that for any $\epsilon>0$, there exists $g_{\epsilon}\in L^{\frac{Q}{2k}}(N)$, an $h_{\epsilon}\in L^{\infty}(N)$, and a constant $C(\epsilon)>0$ such that
\begin{displaymath}
fu=g_{\epsilon}u+h_{\epsilon}, \quad \|g_{\epsilon}\|_{\frac{Q}{2k}}<\epsilon, \quad \|h_{\epsilon}\|\leqslant C(\epsilon).
\end{displaymath}
Here we may assume that $f\neq 0$, and we let
\begin{displaymath}
\begin{split}
    E_l:&=\left\{x\in N; |f|<l\right\},\\
    F_m:&=\left\{x\in N; |u|<m\right\},
\end{split}
\end{displaymath}
where if $\hat{\epsilon}$ is such that $(2\hat{\epsilon})^{\frac{2k}{Q}}=\frac{\epsilon}{2}$, $l$ and $m$ are chosen such that
\begin{displaymath}
    \begin{split}
    \|f\|_{L^{\frac{Q}{2k}}(N\backslash E_l)}<\hat{\epsilon},& \quad \|f\|_{L^{\frac{Q}{2k}}(N\backslash F_m)}<\hat{\epsilon},\\
    E_l\cap F_m\neq \emptyset,& \quad f\big|_{E_l\cap F_m}\neq 0. 
    \end{split}
\end{displaymath}
Given $K\geqslant 1$ an integer that we fix below, we define
\begin{displaymath}
    g_{\epsilon}(x)=\left\{
             \begin{array}{ll}
             \frac{1}{K}f(x), & x\in E_l\cap F_m \\
             f(x), & x\in N\backslash E_l\cap F_m \\
             \end{array}
\right.
\end{displaymath}
and 
\begin{displaymath}
    h_{\epsilon}=(f-g_{\epsilon})u.
\end{displaymath}
Clearly, $h_{\epsilon}=0$ on $N\backslash E_l\cap F_m$. On the other hand,
\begin{displaymath}
    \begin{split}
        \|g_{\epsilon}\|_{\frac{Q}{2k}}^{\frac{Q}{2k}}& =\int_{E_l\cap F_m} |g_{\epsilon}|^{\frac{Q}{2k}}\theta\wedge d\theta^n+ \int_{N\backslash E_l\cap F_m} |g_{\epsilon}|^{\frac{Q}{2k}}\theta\wedge d\theta^n\\
        & \leqslant \int_{E_l\cap F_m} |g_{\epsilon}|^{\frac{Q}{2k}}\theta\wedge d\theta^n+\int_{N\backslash E_l} |g_{\epsilon}|^{\frac{Q}{2k}}\theta\wedge d\theta^n+\int_{N\backslash F_m} |g_{\epsilon}|^{\frac{Q}{2k}}\theta\wedge d\theta^n\\
        & \leqslant \left(\frac{1}{K}\right)^{\frac{Q}{2k}} \int_{E_l\cap F_m} |f|^{\frac{Q}{2k}}\theta\wedge d\theta^n+2\hat{\epsilon},
    \end{split}
\end{displaymath}
so that
\begin{displaymath}
    \|g_{\epsilon}\|_{\frac{Q}{2k}}\leqslant \frac{1}{K}\|f\|_{\frac{Q}{2k}}+\frac{\epsilon}{2}.
\end{displaymath}
Choosing $K$ such that $\frac{1}{K}\|f\|_{\frac{Q}{2k}}<\frac{\epsilon}{2}$, we get
\begin{displaymath}
    \|g_{\epsilon}\|_{\frac{Q}{2k}}<\epsilon.
\end{displaymath}
Now, since $h_{\epsilon}=0$ on $N\backslash E_l\cap F_m$, 
\begin{displaymath}
    \|h_{\epsilon}\|_{\infty}\leqslant \Big|1-\frac{1}{K}\Big|lm,
\end{displaymath}
and this proves the above claim.

Now the equation (\ref{f}) can be written as
\begin{equation}\label{f2}
    P^{\theta}_k u=g_{\epsilon}u+h_{\epsilon}.
\end{equation}
Adding a positive constant if necessary, we may assume that $P^{\theta}_k$ is positive define. Let $\mathcal{H}_{\epsilon}$ be the operator
\begin{displaymath}
    \mathcal{H}_{\epsilon}u=\left(P^{\theta}_k\right)^{-1}\left(g_{\epsilon}u\right).
\end{displaymath}
The equation (\ref{f2}) becomes 
\begin{equation}
    u-\mathcal{H}_{\epsilon}u=\left(P^{\theta}_k\right)^{-1}(h_{\epsilon}).
\end{equation}
For any $r>1$ and any $\tilde{f}\in L^r(N)$, by the regularity theorem \cite[Theorem 6.1]{Folland75}, there exists a unique solution $\tilde{u}\in S^{k,r}(N)$ such that $P^{\theta}_k \tilde{u}=\tilde{f}$. Let $v\in L^r(N)$, $r\geqslant \frac{2Q}{Q-2k}$, and $u_{\epsilon}\in S^{k,r}(N)$ be such that
\begin{displaymath}
    P^{\theta}_k u_{\epsilon}=g_{\epsilon}v.
\end{displaymath}
We set $\hat{r}=\frac{Qr}{Q+2kr}$. By H\"older's inequality, 
\begin{equation}\label{g}
    \|g_{\epsilon}v\|_{\hat{r}} \leqslant \|g_{\epsilon}\|_{\frac{Q}{2k}}\|v\|_r.
\end{equation}
It follows from \cite[Theorem 6.1]{Folland75} and \cite[Theorem 5.5]{JL87} that there exists a constant $C$ such that
\begin{displaymath}
    \|u_{\epsilon}\|_r\leqslant C\|g_{\epsilon}v\|_{\hat{r}}.
\end{displaymath}
Combing this with (\ref{g}), we obtain that 
\begin{displaymath}
    \|u_{\epsilon}\|_r\leqslant C\epsilon \|v\|_{r}.
\end{displaymath}
In other words, for all $r\geqslant \frac{2Q}{Q-2k}$, we have
\begin{displaymath}
         \mathcal{H}_{\epsilon}: L^r(N) \to L^r(N),\quad
        \|\mathcal{H}_{\epsilon}\|_{L^r \to L^r}\leqslant C\epsilon.
\end{displaymath}
Let $r\geqslant \frac{2Q}{Q-2k}$ be given. For $\epsilon>0$ sufficiently small, we obtain that
\begin{displaymath}
    \|\mathcal{H}_{\epsilon}\|_{L^r \to L^r}\leqslant \frac{1}{2}.
\end{displaymath}
This implies that the operator
\begin{displaymath}
    \mathrm{Id}-\mathcal{H}_{\epsilon}: L^r(N) \to L^r(N)
\end{displaymath}
has an inverse. Since
\begin{displaymath}
    \mathrm{Id}-\mathcal{H}_{\epsilon} u=\left(P^{\theta}_k\right)^{-1}(h_{\epsilon})
\end{displaymath}
and $u\in L^{\frac{2Q}{Q-2k}}$, $h_{\epsilon}\in L^{\infty}(N)$, we get that $u\in L^r(N)$.
\end{proof}

\bibliography{mybib}{}
\bibliographystyle{alpha}

\end{document}